\DeclareMathAlphabet{\mathpzc}{OT1}{pzc}{m}{it}
\tikzset{
  symbol/.style={
    draw=none,
    every to/.append style={
      edge node={node [sloped, allow upside down, auto=false]{$#1$}}}
  }
}
\tikzset{decorate sep/.style 2 args=
{decorate,decoration={shape backgrounds,shape=circle,shape size=#1,shape sep=#2}}}
\newtheorem{theorem}{Theorem}[section]
\newtheorem{lemma}[theorem]{Lemma}
\newtheorem{corollary}[theorem]{Corollary}
\newtheorem{definition}[theorem]{Definition}
\newtheorem{remark}[theorem]{Remark}
\theoremstyle{definition}
\newtheorem*{theorem*}{Theorem}
\newcommand{\cC}{\mathcal{C}}
\newcommand{\cG}{\mathcal{G}}
\newcommand{\cI}{\mathcal{I}}
\newcommand{\cN}{\mathcal{N}}
\newcommand{\cP}{\mathcal{P}}
\newcommand{\up}[2]{#1^{(#2)}}
\newcommand{\cent}{{\rm Cent}}
\newcommand{\geng}[3]{\left\langle
\begin{matrix}
#1 ,\\
#2 ,\\
#3
\end{matrix}\right\rangle}
\newcommand{\nf}{{\rm NF}}
\newcommand{\cli}{{\rm Cliff}}
\begin{document}
\title[Projective Clifford Groups]{Embedding the $n$-Qubit Projective Clifford Group into a Symmetric Group}
\author[Lee]{Chin-Yen Lee}
\address{Department of Mathematics, National Tsing Hua University, Hsinchu City, 30013, Taiwan.}
\email{cyl@math.nthu.edu.tw}

\begin{abstract}
In this paper, we construct a symmetric group ${\rm Sym}_{2(4^n-1)}$, which contains a subgroup isomorphic to the $n$-qubit projective Clifford group $\mathcal{C}_n$. To establish this result, we investigate the centralizers of the $z$ gate and the phase gate within the $n$-qubit projective Clifford group, utilizing the normal form of the Clifford operators. As a byproduct, we also provide a presentation of the inertia subgroup of $\mathcal{C}_n$.
\end{abstract}

\subjclass[2020]{Primary 20F05; Secondary 20B35, 20-08}
\keywords{Clifford group, permutation group, normal form, presentation}

\maketitle

\section{Introduction}\label{sec:intro}

In quantum computing, the qubit serves the same role as the bit in classical computing and is represented as a two-dimensional vector space. Operations on qubits are described by $2 \times 2$ unitary matrices. These operations are also called gates, as they are used in quantum circuits.
The main difference between a qubit and a bit is that when we connect qubits, we take their tensor product. The same applies to gates.

Here are some common gates used in quantum computing:
\begin{align*}
	& Z=\begin{bmatrix} 1&0\\0&-1\end{bmatrix},\quad    X=\begin{bmatrix} 0&1\\1&0\end{bmatrix},\quad H=\frac 1 {\sqrt 2}\begin{bmatrix}1&1\\1&-1\end{bmatrix},\quad S=\begin{bmatrix} 1&0\\0&\omega^2\end{bmatrix},\quad I_2=\begin{bmatrix}1&0\\0&1\end{bmatrix},\\
	&    CZ=\begin{bmatrix} 1&0&0&0\\ 0&1&0&0\\0&0&1&0\\0&0&0&-1\end{bmatrix},\quad CX=\begin{bmatrix} 1&0&0&0\\ 0&1&0&0\\0&0&0&1\\0&0&1&0\end{bmatrix},\quad   SWAP=\begin{bmatrix} 1&0&0&0\\ 0&0&1&0\\ 0&1&0&0\\0&0&0&1\end{bmatrix}
\end{align*}
where $\omega$ is a primitive $8$th root of $1$.
The $H$ gate is known as the Hadamard gate, while the $S$ gate is known as the phase gate.
The $CZ$ (controlled $Z$) gate and the $CX$ (controlled $X$) gate have qubit $1$ as the control and qubit $2$ as the target.  The $SWAP$ gate exchanges the $1$st and $2$nd qubits. 
We refer the reader to \cite{NC} for a complete introduction.
When considering gates on $n$-qubits, we can apply the above gates to specific qubits. For example, 
\begin{align*}
	U_i&:= I_2^{\otimes (i-1)}\otimes U \otimes I^{\otimes n-i}_2\quad\text{for } U\in \{Z,X,H,S\},i\in\{1,\dots,n\}
\end{align*}
are the corresponding gates that act only on the $i$th qubit. The controlled $Z$ gate and the controlled $X$ gate 
\begin{align*}
	CZ_{i\to i+1}&:= I_2^{\otimes (i-1)}\otimes CZ \otimes I^{\otimes n-i-1}_2\quad\text{for }i \in\{1,\dots,n-1\},\\
	CX_{i\to i+1}&:= I_2^{\otimes (i-1)}\otimes CX \otimes I^{\otimes n-i-1}_2\quad\text{for }i \in\{1,\dots,n-1\}
\end{align*}
have qubit $i$ as the control and qubit $i+1$ as the target.
The gate
\begin{align*}
	SWAP_{(i,i+1)}&:= I_2^{\otimes (i-1)}\otimes SWAP \otimes I^{\otimes n-i-1}_2\quad\text{for }i \in\{1,\dots,n-1\}
\end{align*}
exchanges the $i$th and $(i+1)$th qubits.

The Pauli groups and the Clifford groups are well-known for their applications in quantum computing.
\begin{definition}
	The \emph{$n$-qubit Pauli group} ${\rm Pauli}_n$ is the subgroup of the $2^n\times 2^n$ unitary matrices  generated by $n$-fold tensor products of $X,Z,\omega^2I_2$:
	\begin{align*}{\rm Pauli}_n=
		\geng{X_1,\dots, X_n}{Z_1,\dots,Z_n}{\omega^2I_2^{\otimes n} }.
	\end{align*}
\end{definition}

\begin{definition}
	The $n$-qubit Clifford group ${\rm Cliff}_n$ is the normalizer of the $n$-qubit Pauli group in the unitary group:
	\begin{align*}
		{\rm Cliff}_n =\left\{ U\in U(2^n): UPU^{-1}\in  {\rm Pauli}_n\text{ for all }P\in {\rm Pauli}_n\right\}.
	\end{align*}
	
\end{definition}
In this paper, we restrict the global phases to $\mathbb{Q}[\omega]$ to ensure that $\cli_n$ becomes a finite group.

In quantum physics, we cannot distinguish between two quantum states that differ only by a global phase, as such a phase does not affect measurable quantities or probabilities. 
Hence, it is common to use their projective version:
\begin{align*}
	\cP_n={\rm Pauli}_n/\langle \omega^2 I_{2}^{\otimes n}\rangle,\quad\cC_n={\rm Cliff}_n/\langle \omega I_{2}^{\otimes n}\rangle.
\end{align*}

To facilitate tracking which group we are discussing, we will use letters $Z_i, X_i, H_i, S_i$, $CZ_{i \to i+1}$, and $SWAP_{(i,i+1)}$ for ${\rm Cliff}_n$, while using letters $z_i, x_i, h_i, s_i, \Lambda_i$, and $\sigma_{(i,i+1)}$ for $\mathcal{C}_n$. 
Despite this notation, both sets of elements are referred to as Clifford operators. 
Recall that the Hadamard gates, phase gates, and controlled $z$ gates generate $\cC_n$:
\begin{align*}
	\cC_1=\langle h_1,s_1\rangle,\quad	\cC_n=\geng{h_1,\dots,h_n}{s_1,\dots,s_n}{\Lambda_1,\dots,\Lambda_{n-1}}\quad\text{for }n\geq 2.
\end{align*}
A completed presentation of $\cC_n$ is recorded in Appendix \ref{sec:appendix}.
We consolidate the common gates among these generators. Here, the notation $[\cdot]:{\rm Cliff}_n\to \cC_n $ denotes the quotient that defines $\cC_n$:
\begin{align*}
	[Z_i]=s_i^2,\quad [X_i]=h_is_i^2h_i,\quad [CX_{i\to i+1}]=h_{i+1}z_ih_{i+1},\quad[SWAP_{(i,i+1)}]=(\Lambda_ih_{i+1}h_i)^3.
\end{align*}

Our goal is to study the structure and representation theory of $\mathcal{C}_n$. 
This is inconvenient; for example, constructing $\mathcal{C}_n$ on a computer using the original definition involves working with a quotient. 
However, there are several equivalent descriptions of $\mathcal{C}_n$. 
For example, its elements can be represented by binary tableaux over $\mathbb Z_2$ \cite{DD,AG,Van}. 
Alternatively, it can be depicted as a quantum circuit in a normal form \cite{Sel}, which will be introduced in the Section \ref{sec:nf}. 
This format provides a presentation for $\mathcal{C}_n$, which is included in Appendix \ref{sec:appendix}.
In this paper, we show that elements of $\mathcal{C}_n$ can be represented as permutations within the conjugacy class of $s_1$ in $\cC_n$.
\begin{theorem*}
	Let $V$ be the conjugacy class of $s_1$ in $\cC_n$ and let $\sigma_g$ denote the permutation induced by the conjugate action of $g \in \cC_n$ on $V$. Then the map $\cC_n \to {\rm Sym}_{|V|}: g \mapsto \sigma_g$ is a faithful homomorphism. Moreover, the size of $V$ is equal to $2(4^n - 1)$.
\end{theorem*}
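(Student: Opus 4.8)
The plan is to split the statement into two essentially independent parts: (A) that $g\mapsto\sigma_g$ is a group homomorphism with trivial kernel, and (B) that $|V|=2(4^n-1)$.

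\emph{Part (A).} The homomorphism property is automatic: a conjugacy class is setwise invariant under every inner automorphism, so each $g\in\cC_n$ genuinely permutes $V$ and $g\mapsto\sigma_g$ is compatible with multiplication. The kernel is $\bigcap_{v\in V}\cent_{\cC_n}(v)=\cent_{\cC_n}(\langle V\rangle)$, so it suffices to show $\langle V\rangle=\cC_n$ and that the center $Z(\cC_n)$ is trivial. For the first, I would use the (classical) extension $1\to\cP_n\to\cC_n\xrightarrow{\;\pi\;}{\rm Sp}(2n,2)\to1$, where $\pi$ records the projective conjugation action on $\cP_n\cong\bZ_2^{2n}$ and $\ker\pi=\cP_n$. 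The normal subgroup $\langle V\rangle$ has $\pi$-image a normal subgroup of ${\rm Sp}(2n,2)$ containing the transvection $\pi(s_1)$; since transvections form a single conjugacy class and generate ${\rm Sp}(2n,2)$, this image is all of ${\rm Sp}(2n,2)$. Moreover $\langle V\rangle$ contains $s_1^2=z_1$ and all its conjugates, whose classes in $\cP_n$ run over every nonzero vector of $\bZ_2^{2n}$ and hence span, so $\cP_n\subseteq\langle V\rangle$; therefore $\langle V\rangle=\cC_n$ and $\ker(g\mapsto\sigma_g)=Z(\cC_n)$. Finally $Z(\cC_n)=1$: a central element commutes with $\cP_n$, hence lies in $\ker\pi=\cP_n$, and being fixed by the irreducible action of ${\rm Sp}(2n,2)$ on $\bZ_2^{2n}$ it must be trivial. (Alternatively, once Part (B) supplies explicit centralizers, one may verify triviality of the kernel by intersecting $\cent_{\cC_n}(v)$ over a short list of conjugates $v$ of $s_1$.)

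\emph{Part (B).} By orbit--stabilizer, $|V|=[\cC_n:\cent_{\cC_n}(s_1)]$, and $|\cC_n|=|\cP_n|\,|{\rm Sp}(2n,2)|=4^n\cdot 2^{n^2}\prod_{i=1}^n(4^i-1)$, so the task reduces to computing $|\cent_{\cC_n}(s_1)|$, again through the extension above. The intersection $\cent_{\cC_n}(s_1)\cap\cP_n$ consists of the Paulis whose first tensor factor lies in $\{I,Z\}$ (these commute with the diagonal gate $S_1$ in the projective sense, whereas $X$ and $Y$ do not), an elementary abelian group of order $2^{2n-1}$. The image $\pi(\cent_{\cC_n}(s_1))$ lies inside $\cent_{{\rm Sp}(2n,2)}(\pi(s_1))$, which over $\bF_2$ is the stabilizer of the nonzero vector $v_0\in\bZ_2^{2n}$ corresponding to $z_1$ and has order $|{\rm Sp}(2n,2)|/(4^n-1)$. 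The reverse inclusion --- that \emph{every} symplectic transformation fixing $v_0$ lifts to a Clifford operator commuting with $s_1$ \emph{exactly}, not merely modulo $\cP_n$ --- is where the normal form of Clifford operators does the work: it lets one exhibit enough explicit operators that visibly commute with the diagonal gate $S_1$ (namely $s_1$ itself, all Clifford operations supported on qubits $2,\dots,n$, and the controlled-$z$ and controlled-$x$ gates having qubit $1$ as control, all of which preserve the computational basis of qubit $1$), and to check that their $\pi$-images already generate the whole stabilizer of $v_0$. Granting this, $|\cent_{\cC_n}(s_1)|=2^{2n-1}\cdot|{\rm Sp}(2n,2)|/(4^n-1)$, whence
\[
|V|=\frac{4^n\,|{\rm Sp}(2n,2)|}{2^{2n-1}\,|{\rm Sp}(2n,2)|/(4^n-1)}=\frac{4^n(4^n-1)}{2^{2n-1}}=2(4^n-1).
\]

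The main obstacle, as flagged above, is exactly that reverse inclusion in Part (B): identifying the \emph{actual} centralizer $\cent_{\cC_n}(s_1)$ rather than merely its symplectic shadow $\cent_{{\rm Sp}(2n,2)}(\pi(s_1))$ amounts to solving a lifting problem through the Pauli extension, which abstract group theory does not resolve and for which the concrete normal form seems essential. A convenient intermediate target is the centralizer $\cent_{\cC_n}(z_1)$ of the $z$ gate --- the inertia subgroup attached to $v_0$, which is simply the full $\pi$-preimage of the stabilizer of $v_0$ and admits an explicit presentation --- after which $\cent_{\cC_n}(s_1)$ is obtained by cutting it down with the single condition $g s_1 g^{-1}=s_1$; this route also yields the presentation of the inertia subgroup promised in the abstract.
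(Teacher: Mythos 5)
Your argument is correct, and it reaches the theorem by a genuinely different route from the paper's. For faithfulness, the paper intersects the explicitly computed centralizers $\cent_{\cC_n}(\{s_k,h_ks_kh_k\})$ over $k=1,\dots,n$ (Lemma 6.1, which is itself another normal-form induction); you instead note that the kernel equals $\cent_{\cC_n}(\langle V\rangle)=Z(\cC_n)$ once $\langle V\rangle=\cC_n$, and kill the center through the extension $1\to\cP_n\to\cC_n\to{\rm Sp}(2n,2)\to 1$ --- shorter and more conceptual (one could even shortcut further by quoting the paper's cited classification of normal subgroups of $\cC_n$: a nontrivial center would have to be $\cP_n$ or $[\cC_2,\cC_2]$, neither of which is central). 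For the count, the paper computes $|\cent_{\cC_n}(s_1)|=|\cI\cN_n|/2$ by a rewriting-rule induction (Theorem 5.2 together with Lemma 5.1), whereas you read it off as $2^{2n-1}\cdot|{\rm Stab}_{{\rm Sp}(2n,2)}(v_0)|$ from the same extension; your lifts --- $s_1$, the Cliffords supported on qubits $2,\dots,n$, and $CZ_{1\to j}$, $CX_{1\to j}$ with qubit $1$ as control --- are exactly the paper's generating set for $\cent_{\cC_n}(s_1)$ in disguise, and they commute with the diagonal $S_1$ on the nose with no appeal to the normal form. The one step you leave ``granted'' --- that the symplectic images of these lifts generate all of ${\rm Stab}(v_0)$ --- is true and routine: the stabilizer of a nonzero vector is a semidirect product of a unipotent radical of order $2^{2n-1}$ with a Levi factor ${\rm Sp}(2n-2,2)$; the Levi is hit by the qubit-$2,\dots,n$ Cliffords and the radical is generated by the images of $s_1$, $CZ_{1\to j}$, $CX_{1\to j}$. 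So, contrary to your closing worry, the normal form is not actually needed on your route; what the normal-form machinery buys the paper instead is the explicit presentation of $\cI\cN_n$ and of the centralizers as subgroups given by generators, which your order computation does not by itself produce.
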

Note that there is no effect of phase in the conjugacy action of ${\rm Cliff}_n$. 
Additionally, observe that ${\rm Trace}(\omega^k S_1) = {\rm Trace}(U S_1 U^{-1})={\rm Trace}(S_1)$ for any $U \in {\rm Cliff}_n$ only holds when $k \equiv 0$ (mod $8$).
Thus, the size of the conjugacy class of $S_1$ in ${\rm Cliff}_n$ is the same as the size of the conjugacy class of $s_1$ in $\cC_n$. It suffices to describe the corresponding permutations of the generators $H_i, S_i, CZ_{i \to i+1}$ of ${\rm Cliff}_n$ acting on the conjugacy class of $S_1$ in ${\rm Cliff}_n$.
Using this process to construct $\cC_n$ involves only matrix multiplication and does not require taking a quotient.
This result allows us to study $\cC_n$ and its character table for $n \leq 6$ more easily, for example, by using GAP4 \cite{GAP}.
\begin{remark}
	It is worth mentioning here that our product in $\cC_n$ is written from left to right, consistent with the order of both the permutations in GAP4 and the quantum circuits. In contrast, this order is the opposite of that used in matrix multiplication within ${\rm Cliff}_n$.
\end{remark}

The key idea is to understand the structure of the centralizers of the $z_1$ and $s_1$ gates in $\cC_n$, namely,
\begin{align*}
	\cent_{\cC_n}(u) =\{g\in\cC_n: ug=gu\}\quad\text{for }u\in\{s_1^2,s_1\}.
\end{align*}
We will prove the following result:
\begin{theorem*}
	For $n\geq 2$, the centralizers of $s_1^2$ and $s_1$ in $\cC_n$ are given by
	\begin{align*}
		\cent_{\cC_n}(u)=
		\geng{g, h_2, \dots, h_n}{s_1, \dots, s_n}{\Lambda_1,\dots, \Lambda_{n-1}},\quad\text{where } g=\begin{cases}
			h_1s_1h_1s_1^3h_1,&\text{if }u=s_1^2,\\
			1,&\text{if }u=s_1.	
		\end{cases}
	\end{align*}
	Moreover $\cent_{\cC_1}(s_1^2)=\langle h_1s_1h_1s_1^3h_1, s_1\rangle$ and $\cent_{\cC_1}(s_1)=\langle s_1\rangle$.
\end{theorem*}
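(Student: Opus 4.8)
The plan is to prove both equalities by double inclusion, treating $u=s_1$ as the essential case and deducing $u=s_1^2$ from it; the groups $\cC_1$ are handled separately. Set $K:=\langle h_2,\dots,h_n,s_1,\dots,s_n,\Lambda_1,\dots,\Lambda_{n-1}\rangle$. The inclusion ``$\supseteq$'' is straightforward: $s_1$ is a diagonal operator, so it commutes with the diagonal operators $s_1,\dots,s_n,\Lambda_1,\dots,\Lambda_{n-1}$, while $h_2,\dots,h_n$ act on qubits disjoint from the support of $s_1$; hence $K\subseteq\cent_{\cC_n}(s_1)\subseteq\cent_{\cC_n}(s_1^2)$. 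For $u=s_1^2$ it then remains to check that $g:=h_1s_1h_1s_1^3h_1$ centralizes $s_1^2=z_1$; a one-qubit computation (in ${\rm Cliff}_1$, or via the conjugation action on Paulis) shows that $g$ is an involution with $gs_1g^{-1}=s_1^3$, so $g$ commutes with $z_1=s_1^2$ but not with $s_1$, and in particular $g\in\cent_{\cC_n}(s_1^2)\setminus\cent_{\cC_n}(s_1)$.

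Next I would simultaneously reduce the case $u=s_1^2$ to the case $u=s_1$ and compute the index of the centralizer, using the identity $S_1=\tfrac{1+\omega^2}{2}I_2^{\otimes n}+\tfrac{1-\omega^2}{2}Z_1$ in ${\rm Cliff}_n$. Every conjugate of $S_1$ in ${\rm Cliff}_n$ then has the shape $\tfrac{1+\omega^2}{2}I_2^{\otimes n}+\tfrac{1-\omega^2}{2}Q$, where $Q=UZ_1U^{-1}$ ranges over the $2(4^n-1)$ operators $\pm P$ with $P$ a non-identity tensor product of single-qubit Paulis (here one uses that ${\rm Cliff}_n$ acts transitively on these operators by conjugation), and since $\omega^4=-1$ a direct expansion shows the square of such an operator is exactly $Q$. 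By the trace argument of the Introduction the conjugacy class of $s_1$ in $\cC_n$ has the same size $2(4^n-1)$, so $[\cC_n:\cent_{\cC_n}(s_1)]=2(4^n-1)$; moreover the only conjugates of $s_1$ squaring to $s_1^2$ are $s_1$ and $s_1^3$. Hence $\cent_{\cC_n}(s_1^2)$ acts by conjugation on the two-element set $\{s_1,s_1^3\}$ with kernel $\cent_{\cC_n}(s_1)$, and this index-two extension is realized by $g$; so once $\cent_{\cC_n}(s_1)$ is identified we obtain $\cent_{\cC_n}(s_1^2)=\langle g,\cent_{\cC_n}(s_1)\rangle$ and hence the claimed generators.

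It remains to prove $\cent_{\cC_n}(s_1)\subseteq K$. Since $K\subseteq\cent_{\cC_n}(s_1)$ and $[\cC_n:\cent_{\cC_n}(s_1)]=2(4^n-1)$, it suffices to show $[\cC_n:K]\le 2(4^n-1)$, i.e.\ that $\cC_n$ is a union of at most $2(4^n-1)$ right cosets of $K$. I would establish this by a coset enumeration driven by the normal form of Section \ref{sec:nf} and the presentation of Appendix \ref{sec:appendix}: $K$ already contains the copy of $\cC_{n-1}$ supported on qubits $2,\dots,n$, together with $s_1$ and $\Lambda_1$, so $h_1$ is the only generator of $\cC_n$ lying outside $K$, and modulo $K$ only the ``qubit-$1$ part'' of a normal form survives. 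Using the relations that move $h_1$ past the diagonal generators, together with the braid-type relations among $h_1,h_2,\Lambda_1$, one reduces an arbitrary word to one of $2(4^n-1)$ representatives, inducting on $n$. This index bound is the main obstacle: one must verify that left multiplication by $h_1$ --- the unique generator of $\cC_n$ not lying in $K$ --- never produces a genuinely new coset, and keeping this bookkeeping under control uniformly in $n$ (equivalently, arranging the normal form so that the first qubit is peeled off and one recurses on the remaining ones) is the delicate point; everything else is a short matrix computation or a counting argument.

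Finally, for $\cC_1=\langle h_1,s_1\rangle\cong{\rm Sym}_4$: here $s_1$ has order $4$ (a $4$-cycle) and $s_1^2$ has order $2$ (a product of two disjoint transpositions). The centralizer of a $4$-cycle in ${\rm Sym}_4$ is the cyclic subgroup it generates, so $\cent_{\cC_1}(s_1)=\langle s_1\rangle$; the centralizer of a double transposition in ${\rm Sym}_4$ is a dihedral group of order $8$, and since $g=h_1s_1h_1s_1^3h_1$ is an involution with $gs_1g^{-1}=s_1^3$, the elements $s_1$ and $g$ generate precisely that dihedral group, giving $\cent_{\cC_1}(s_1^2)=\langle h_1s_1h_1s_1^3h_1,s_1\rangle$.
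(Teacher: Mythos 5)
Your reduction of the two statements to a single index bound is correct and in places cleaner than the paper's route: the inclusion $K:=\langle h_2,\dots,h_n,s_1,\dots,s_n,\Lambda_1,\dots,\Lambda_{n-1}\rangle\subseteq\cent_{\cC_n}(s_1)$ is indeed immediate from disjoint supports and diagonality; the identity $S_1=\tfrac{1+\omega^2}{2}I+\tfrac{1-\omega^2}{2}Z_1$ together with the trace argument does give $|V|=2(4^n-1)$ and hence $[\cC_n:\cent_{\cC_n}(s_1)]=2(4^n-1)$ (the paper obtains this only as a corollary of the centralizer theorems, so you would need to supply the standard transitivity of ${\rm Cliff}_n$ on the signed non-identity Paulis as an independent input); and the passage from $\cent_{\cC_n}(s_1)$ to $\cent_{\cC_n}(s_1^2)$ via the action on $\{s_1,s_1^3\}$, with $g=h_1s_1h_1s_1^3h_1$ realizing the swap, is sound and avoids the paper's detour through $\cI\cN_n$ and the GAP-verified conjugation identity. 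The $\cC_1$ case via $\cC_1\cong{\rm Sym}_4$ is also fine.

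The genuine gap is the inclusion $\cent_{\cC_n}(s_1)\subseteq K$, equivalently the bound $[\cC_n:K]\le 2(4^n-1)$, which you describe only as a plan (``a coset enumeration driven by the normal form\dots one reduces an arbitrary word to one of $2(4^n-1)$ representatives'') and yourself flag as ``the main obstacle.'' This is not a routine verification that can be left as bookkeeping: it is exactly where essentially all of the content of the paper's Sections \ref{sec:Css} and \ref{sec:Cs} lives. Knowing that $h_1$ is the only generator outside $K$ does not by itself control the cosets $Kh_1w$; a priori words such as $h_1\Lambda_1h_1$, $h_1s_1h_1s_1h_1$, etc.\ could proliferate, and bounding their number uniformly in $n$ requires either a Todd--Coxeter enumeration for each fixed $n$ (which does not give a theorem for all $n$) or a structural argument. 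The paper supplies the latter by computing $|\cent_{\cC_n}(s_1)|$ directly: it uses uniqueness of the Selinger normal form, eight explicit rewriting lemmas describing how $S$ and $HSH$ propagate through the basic gates $B_j$ and $D_l$, and a simultaneous two-variable induction on the quantities ${\rm Num}_1(m)=|\{u:h_1s_1h_1u=us_1\}|$ and ${\rm Num}_2(m)=|\{u:s_1u=us_1\}|$. Some equivalent of that analysis (a concrete reduction of an arbitrary word $h_1w$, $w\in K$, to one of $2(4^n-1)$ explicit representatives, with a proof that the list is exhaustive) must be carried out for your proof to close; as written, the hardest step is asserted rather than proved.
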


The above theorem implies that $\cent_{\mathcal{C}_n}(s_1^2)$ is isomorphic to the inertia subgroup $\mathcal{I}\mathcal{N}_n$ of $\mathcal{C}_n$, which will be introduced in Section \ref{sec:presentofINn}.
It has been shown in \cite[Theorem IV.4]{Ma} that the set of irreducible representations of $\mathcal{C}_n$ is determined by the irreducible representations of $\mathcal{I}\mathcal{N}_n / \mathcal{P}_n$ and ${\rm Sp}(2n, 2)$, the symplectic group of degree $2n$ over $\mathbb{Z}_2$.
We provide a presentation of $\mathcal{I}\mathcal{N}_n$, which may be useful for future studies.

The remainder of the paper is organized as follows: 
In Section \ref{sec:nf}, we recall the normal form of Clifford operators, which provides a structured way to describe the elements of $\cC_n$.
In Section \ref{sec:rw}, we recall how the rewriting rules operate. 
In Section~\ref{sec:C1C2}, we collect several technical identities needed in the proofs of the main results.
In Section \ref{sec:Css}, we determine the centralizer of the $z_1$ gate in $\cC_n$.
In Section \ref{sec:Cs}, we determine the centralizer of the $s_1$ gate in $\cC_n$.
In Section \ref{sec:perm}, we prove that the conjugacy action of $\cC_n$ on the conjugacy class of $s_1$ in $\cC_n$ induced a faithful homomorphism from $\cC_n$ into ${\rm Sym}_{2(4^n-1)}$.
In Section \ref{sec:presentofINn}, we provide a presentation of $\cI\cN_n$.
In Section \ref{sec:openprob}, we summarize the results of this paper and present
further results related to this work.

\section{Normal form for Clifford operators}\label{sec:nf}

In this section, we use quantum circuits to represent the elements of $\cC_n$.
Following the notation in \cite{Sel}, the circuits are written from left to right. The generators $h_i$, $s_i$, and $\Lambda_i$ are represented by placing the corresponding quantum gates
\scalebox{0.55}{
	\begin{quantikz}
		&\gate{H}& \qw
	\end{quantikz},
	\begin{quantikz}
		&\gate{S}& \qw
	\end{quantikz},
	\begin{quantikz}
		&\ctrl{1}&\qw\\
		&\control{}&\qw
	\end{quantikz}
}     
in appropriate positions within the circuit.

For example, $h_1h_2s_1\Lambda_1$ and $\Lambda_1\Lambda_2$ are represented as
\begin{equation*}
	\scalebox{0.55}{
		\begin{quantikz}
			&\gate[2]{h_1h_2s_1\Lambda_1}&\qw\\
			&							&\qw
		\end{quantikz}
		=	
		\begin{quantikz}
			&\gate{H}&\gate{S}&\ctrl{1}   &\qw\\
			&\gate{H}&\qw     &\control{}&\qw
		\end{quantikz}
		\begin{quantikz}
			&\gate[3]{\Lambda_1\Lambda_2}&\qw\\
			&							 &\qw\\
			&                            &\qw
		\end{quantikz}
		=	
		\begin{quantikz}
			&\ctrl{1}  &\qw       &\qw&\qw\\
			&\control{}&\ctrl{1}  &\qw&\qw\\
			&\qw       &\control{}&\qw&\qw
	\end{quantikz}}
\end{equation*}
and their product $h_1h_2s_1\Lambda_1\Lambda_1\Lambda_2$ simply represented by the concatenation of two circuits:
\begin{equation*}
	\scalebox{0.55}{
		\begin{quantikz}
			&\gate[3]{h_1h_2s_1\Lambda_1\Lambda_1\Lambda_2}&\qw\\
			&											   &\qw\\
			&                                 			   &\qw
		\end{quantikz}
		=
		\begin{quantikz}
			&\gate[2]{h_1h_2s_1\Lambda_1}&\gate[3]{\Lambda_1\Lambda_2}&\qw\\
			&							 &    						  &\qw\\
			&\qw						 &							  &\qw
		\end{quantikz}
		=
		\begin{quantikz}
			&\gate{H}&\gate{S}&\ctrl{1}  &\ctrl{1}  &\qw       &\qw\\
			&\gate{H}&\qw     &\control{}&\control{}&\ctrl{1}  &\qw\\
			&\qw     &\qw     &\qw       &\qw		&\control{}&\qw     
		\end{quantikz}
	}.
\end{equation*}
In order to define the normal form, we define below several basic quantum gates and categorize them into five types, labeled from $A$ to $E$:
\begin{center}
	\begin{minipage}{0.3\textwidth}
		\scalebox{0.55}{
			\begin{quantikz}
				&\gate{A_1}& \qw        
			\end{quantikz}
			=
			\begin{quantikz}
				&\qw
		\end{quantikz}} 
		
		\scalebox{0.55}{
			\begin{quantikz}
				&\gate{A_2}&\qw    
			\end{quantikz}
			=
			\begin{quantikz}
				&\gate{H}&\qw
		\end{quantikz}} 
		\scalebox{0.55}{
			\begin{quantikz}
				&\gate{A_3}&\qw
			\end{quantikz}
			=
			\begin{quantikz}
				&\gate{H}&\gate{S}&\gate{H}&\qw
		\end{quantikz}} 
	\end{minipage}
	\begin{minipage}{0.3\textwidth}
		\scalebox{0.55}{
			\begin{quantikz}
				&\gate[2]{B_1}&\qw\\
				&             &\qw
			\end{quantikz}
			=
			\begin{quantikz}
				&\qw     &\ctrl{1}  &\gate{H} &\ctrl{1}  &\gate{H}&\ctrl{1}  &\qw\\
				&\gate{H}&\control{}&\gate{H} &\control{}&\gate{H}&\control{}&\qw
		\end{quantikz}} 
		\scalebox{0.55}{
			\begin{quantikz}
				&\gate[2]{B_2}&\\
				&			  &
			\end{quantikz}
			=
			\begin{quantikz}
				&\ctrl{1}  &\gate{H}&\ctrl{1}  &\qw\\
				&\control{}&\gate{H}&\control{}&\qw
		\end{quantikz}} 
		\scalebox{0.55}{
			\begin{quantikz}
				&\gate[2]{B_3}&\\
				&			  &
			\end{quantikz}
			=
			\begin{quantikz}
				&\gate{H}&\gate{S}&\ctrl{1}  &\gate{H}&\ctrl{1}  &\qw\\
				&\qw     &\qw     &\control{}&\gate{H}&\control{}&\qw
		\end{quantikz}}
		\scalebox{0.55}{
			\begin{quantikz}
				&\gate[2]{B_4}&\qw\\
				&             &\qw
			\end{quantikz}
			=
			\begin{quantikz}
				&\gate{H}&\ctrl{1}  &\gate{H}&\ctrl{1}  &\qw\\
				&\qw     &\control{}&\gate{H}&\control{}&\qw
		\end{quantikz}} 
	\end{minipage}
	\begin{minipage}{0.3\textwidth}
		\scalebox{0.55}{
			\begin{quantikz}
				&\gate{C_1}&\qw        
			\end{quantikz}
			=
			\begin{quantikz}
				&\qw
		\end{quantikz}} 
		
		\scalebox{0.55}{
			\begin{quantikz}
				&\gate{C_2}&\qw      
			\end{quantikz}
			=
			\begin{quantikz}
				&\gate{H}&\gate{S}&\gate{S}&\gate{H}&\qw
		\end{quantikz}} 
	\end{minipage}
	
	\begin{minipage}{0.4\textwidth}
		\scalebox{0.55}{
			\begin{quantikz}
				&\gate[2]{D_1}&\qw\\
				&             &\qw
			\end{quantikz}
			=
			\begin{quantikz}
				&\ctrl{1}  &\gate{H}&\ctrl{1}  &\gate{H}&\ctrl{1}  &\qw     &\qw\\
				&\control{}&\gate{H}&\control{}&\gate{H}&\control{}&\gate{H}&\qw
		\end{quantikz}} 
		\scalebox{0.55}{
			\begin{quantikz}
				&\gate[2]{D_2}&\qw\\
				&             &\qw
			\end{quantikz}
			=
			\begin{quantikz}
				&\gate{H}&\ctrl{1}  &\gate{H}&\ctrl{1}  &\qw     &\qw\\
				&\qw     &\control{}&\gate{H}&\control{}&\gate{H}&\qw
		\end{quantikz}} 
		\scalebox{0.55}{
			\begin{quantikz}
				&\gate[2]{D_3}&\qw\\
				&             &\qw
			\end{quantikz}
			=
			\begin{quantikz}
				&\gate{H}&\qw     &\ctrl{1}  &\gate{H}&\ctrl{1}  &\qw     &\qw\\
				&\gate{H}&\gate{S}&\control{}&\gate{H}&\control{}&\gate{H}&\qw
		\end{quantikz}}
		\scalebox{0.55}{
			\begin{quantikz}
				&\gate[2]{D_4}&\qw\\
				&             &\qw
			\end{quantikz}
			=
			\begin{quantikz}
				&\gate{H}&\ctrl{1}  &\gate{H}&\ctrl{1}  &\qw     &\qw\\
				&\gate{H}&\control{}&\gate{H}&\control{}&\gate{H}&\qw
		\end{quantikz}} 
	\end{minipage}
	\begin{minipage}{0.4\textwidth}
		\scalebox{0.55}{
			\begin{quantikz}
				&\gate{E_1}&\qw       
			\end{quantikz}
			=
			\begin{quantikz}
				&\qw 
		\end{quantikz}} 
		
		\scalebox{0.55}{
			\begin{quantikz}
				&\gate{E_2}&\qw       
			\end{quantikz}
			=
			\begin{quantikz}
				&\gate{S}&\qw 
		\end{quantikz}} 
		
		\scalebox{0.55}{
			\begin{quantikz}
				&\gate{E_3}&\qw
			\end{quantikz}
			=
			\begin{quantikz}
				&\gate{S}& \gate{S}&\qw
		\end{quantikz}} 
		\scalebox{0.55}{
			\begin{quantikz}
				&\gate{E_4}&\qw
			\end{quantikz}
			=
			\begin{quantikz}
				&\gate{S}&\gate{S}&\gate{S}&\qw
		\end{quantikz}} 
	\end{minipage}
\end{center}

\begin{definition}
	We say that an $n$-qubit circuit is \emph{($\ell$-layer) $ZX$-normal} if it is of the form:
	\begin{equation*}
		\scalebox{0.55}{
			\begin{quantikz}
				&\gate[7]{\up N n}&\qw\\
				&				  &\qw\\
				&				  &\qw\\
				&				  &\qw\\
				&				  &\qw\\
				&				  &\qw\\
				&				  &\qw
			\end{quantikz}
			=
			\begin{quantikz}
				&\qw              &\qw              &\qw              &\qw   &\gate[2]{B_{j_{\ell-1}}}&\gate{C_k}&\gate[2]{D_{l_1}}&\qw              &\qw              &\qw              &\qw   &\qw                  &\qw  	    &\qw\\
				&\qw              &\qw              &\qw              &\push{\cdots}&\qw                     &\qw       &\qw              &\gate[2]{D_{l_2}}&\qw              &\qw              &\qw   &\qw                  &\qw   	&\qw\\
				&\qw              &\qw              &\gate[2]{B_{j_2}}&\push{\cdots}&\qw                     &\qw       &\qw              &\qw              &\gate[2]{D_{l_3}}&\qw              &\qw   &\qw                  &\qw       &\qw\\
				&\qw              &\gate[2]{B_{j_1}}&\qw              &\qw   &\qw                     &\qw       &\qw              &\qw              &\qw              &\gate[2]{D_{l_4}}&\qw   &\qw                  &\qw       &\qw\\
				&\gate{A_i}       &\qw              &\qw              &\qw   &\qw                     &\qw       &\qw              &\qw              &\qw              &\qw              &\push{\cdots}&\qw                  &\qw		&\qw\\
				&\qw              &\qw              &\qw              &\qw   &\qw                     &\qw       &\qw              &\qw              &\qw              &\qw              &\push{\cdots}&\gate[2]{D_{l_{n-1}}}&\qw		&\qw\\
				&\qw              &\qw              &\qw              &\qw   &\qw                     &\qw       &\qw              &\qw              &\qw              &\qw              &\qw   &\qw                  &\gate{E_h}&\qw
		\end{quantikz}} 
	\end{equation*}
	for $1\leq \ell\leq n$ where $i\in\{1,2,3\}, j_1,\dots, j_{\ell-1} \in \{1,2,3,4\}, k\in \{1,2\}, l_1,\dots,l_{n-1}\in\{1,2,3,4\}, h\in \{1,2,3,4\}$.	
	The number $\ell$ indicates the wire containing the gate $A_i$, counted from the top.
\end{definition}

\begin{theorem}\cite[Section 5]{Sel} \label{thm:normform_sel}
	Every $g \in \cC_n$ can be uniquely expressed as \begin{align*} g = \up Nn_g \up N{n-1}_g \cdots \up N1_g, \end{align*} where each $\up N{i}_g$ is a $ZX$-normal form in $\cC_i$. In other words,
	\begin{equation}\label{eq:nf}
		\scalebox{0.55}{		\begin{quantikz}
				&\gate[7]{g}&\qw\\
				&				  &\qw\\
				&				  &\qw\\
				&				  &\qw\\
				&				  &\qw\\
				&				  &\qw\\
				&				  &\qw
			\end{quantikz}=
			\begin{quantikz}
				&\gate[7]{\nf(g)}&\qw\\
				&				  &\qw\\
				&				  &\qw\\
				&				  &\qw\\
				&				  &\qw\\
				&				  &\qw\\
				&				  &\qw
			\end{quantikz}:=
			\begin{quantikz}
				&\gate[4]{\up N{n}_g}&\gate[3]{\up N{n-1}_g}&\push{\cdots}&\gate[2]{\up N{2}_g} &\gate{\up N1_g}     &\qw\\
				&\qw               &\qw                 &\push{\cdots}&\qw                &\qw               &\qw\\
				&\qw               &\qw                 &\qw   &\qw                &\qw               &\qw\\
				&\qw               &\qw                 &\qw   &\qw                &\qw               &\qw
		\end{quantikz}}.
	\end{equation}
\end{theorem}
The order of $\cC_n$ is well-known, and it can also be obtained directly
from Theorem \ref{thm:normform_sel}. For each \(i\), the number of $ZX$-normal forms in
$\cC_i$ is
\begin{align*}
\sum_{\ell=1}^{i}
3\cdot 4^{\ell-1}\cdot 2\cdot 4^{i-1}\cdot 4
=
2^{2i+1}(4^i-1).
\end{align*}
Since the decomposition in Theorem \ref{thm:normform_sel}. is unique, it follows that
\begin{align}\label{eq:orderCn}
|C_n|
=
\prod_{i=1}^{n}2^{2i+1}(4^i-1)
=
2^{n^2+2n}\prod_{i=1}^n (4^i-1).
\end{align}

\section{Rewriting rules}\label{sec:rw}

\begin{definition}
	Consider an $n$-qubit Clifford circuit in normal form:
	\begin{equation*}
		\scalebox{0.45}{
			\begin{quantikz}
				&\push{1}&\qw&\qw&\qw&\qw&\qw&\qw&\qw&\qw&\gate[2]{B_{j_m-1}}&\push{2}&\gate{C_k}&\push{3}&\gate[2]{D_{l_1}}&\qw&\qw&\qw&\qw&\qw&\qw&\qw&\qw&\qw&\qw&\qw&\qw&\push{1}&\gate[6]{\up N{n-1}}&\push{\cdots}&\qw\\
				&\push{1}&\qw&\qw&\qw&\qw&\qw&\qw&\push{\cdots}&\push{2}&&\qw&\qw&\push{1}&&\push{4}&\gate[2]{D_{l_2}}&\qw&\qw&\qw&\qw&\qw&\qw&\qw&\qw&\qw&\qw&\push{1}&&\push{\cdots}&\qw\\
				&\push{1}&\qw&\qw&\qw&\qw&\gate[2]{B_{j_2}}&\push{2}&\push{\cdots}&\qw&\qw&\qw&\qw&\push{1}&\qw&\qw&&\push{4}&\gate[2]{D_{l_3}}&\qw&\qw&\qw&\qw&\qw&\qw&\qw&\qw&\push{1}&&\push{\cdots}&\qw\\
				&\push{1}&\qw&\qw&\gate[2]{B_{j_1}}&\push{2}&&\qw&\qw&\qw&\qw&\qw&\qw&\push{1}&\qw&\qw&\qw&\qw&&\push{4}&\gate[2]{D_{l_4}}&\qw&\qw&\qw&\qw&\qw&\qw&\push{1}&&\push{\cdots}&\qw\\
				&\push{1}&\gate{A_i}&\push{2}&&\qw&\qw&\qw&\qw&\qw&\qw&\qw&\qw&\push{1}&\qw&\qw&\qw&\qw&\qw&\qw&&\push{4}&\push{\cdots}&\qw&\qw&\qw&\qw&\push{1}&&\push{\cdots}&\qw\\
				&\push{1}&\qw&\qw&\qw&\qw&\qw&\qw&\qw&\qw&\qw&\qw&\qw&\push{1}&\qw&\qw&\qw&\qw&\qw&\qw&\qw&\qw&\push{\cdots}&\push{4}&\gate[2]{D_{l_n-1}}&\qw&\qw&\push{1}&&\qw&\qw\\
				&\push{1}&\qw&\qw&\qw&\qw&\qw&\qw&\qw&\qw&\qw&\qw&\qw&\push{1}&\qw&\qw&\qw&\qw&\qw&\qw&\qw&\qw&\qw&\qw&&\push{4}&\gate{E_h}&\qw&\qw&\qw&\qw\\
			\end{quantikz}
		}
	\end{equation*}
	
	We say that a circuit is in \emph{dirty normal form} if it is of the form , except that the circuit may contain some additional gates, subject to the following rules:
	\begin{enumerate}
		\item[$\bullet$] $H$-gates may be added to any wire labelled $1$;
		\item[$\bullet$] $S$-gates may be added to any wire labelled $1$, $2$, $3$ or $4$;
		\item[$\bullet$] $X$-gates may be added to any wire labelled $2$;
		\item[$\bullet$] $CZ$-gates may be added to any pair of adjacent wires, provided that the top wire is labelled $1$, $2$ or $3$, and the bottom wire is labelled $1$.
	\end{enumerate}
\end{definition}

Any dirty normal form can be transformed into an equivalent normal form by applying a finite sequence of rewriting rules in the left-to-right direction. For details on the rewriting rules, refer to \cite[Figure 3--7]{Sel}.
We record below the rules that will be needed in the subsequent sections.
\begin{lemma}\label{lem:ace_sel}
\cite[Figure 3, 4, 6]{Sel}
The following rewriting rules hold:
\begin{center}

\begin{minipage}{0.65\textwidth}
\centering

\makebox[\linewidth][c]{%
\scalebox{0.5}{
\begin{quantikz}
    & \gate{S} & \gate{A_1} & \qw
\end{quantikz}
\;=\;
\begin{quantikz}
    & \gate{A_1} & \gate{S} & \qw
\end{quantikz}}}

\makebox[\linewidth][c]{%
\scalebox{0.5}{
\begin{quantikz}
    & \gate{S} & \gate{A_2} & \qw
\end{quantikz}
\;=\;
\begin{quantikz}
    & \gate{A_3} & \gate{X} & \gate{S}
    & \gate{S} & \gate{S} & \qw
\end{quantikz}}}

\makebox[\linewidth][c]{%
\scalebox{0.5}{
\begin{quantikz}
    & \gate{S} & \gate{A_3} & \qw
\end{quantikz}
\;=\;
\begin{quantikz}
    & \gate{A_2} & \gate{S}
    & \gate{S} & \gate{S} & \qw
\end{quantikz}}}

\end{minipage}

\vspace{1em}

\begin{minipage}{0.76\textwidth}
\centering

\begin{minipage}[t]{0.55\textwidth}
\centering

\scalebox{0.5}{
\begin{quantikz}
    & \gate{X} & \gate{C_1} & \qw
\end{quantikz}
=
\begin{quantikz}
    & \gate{C_2} & \qw
\end{quantikz}}

\scalebox{0.5}{
\begin{quantikz}
    & \gate{X} & \gate{C_2} & \qw
\end{quantikz}
=
\begin{quantikz}
    & \gate{C_1} & \qw
\end{quantikz}}

\scalebox{0.5}{
\begin{quantikz}
    & \gate{S} & \gate{C_1} & \qw
\end{quantikz}
=
\begin{quantikz}
    & \gate{C_1} & \gate{S} & \qw
\end{quantikz}}

\scalebox{0.5}{
\begin{quantikz}
    & \gate{S} & \gate{C_2} & \qw
\end{quantikz}
=
\begin{quantikz}
    & \gate{C_2} & \gate{S}
    & \gate{S} & \gate{S} & \qw
\end{quantikz}}

\end{minipage}
\hfill
\begin{minipage}[t]{0.35\textwidth}
\centering

\scalebox{0.5}{
\begin{quantikz}
    & \gate{S} & \gate{E_1} & \qw
\end{quantikz}
=
\begin{quantikz}
    & \gate{E_2} & \qw
\end{quantikz}}

\scalebox{0.5}{
\begin{quantikz}
    & \gate{S} & \gate{E_2} & \qw
\end{quantikz}
=
\begin{quantikz}
    & \gate{E_3} & \qw
\end{quantikz}}

\scalebox{0.5}{
\begin{quantikz}
    & \gate{S} & \gate{E_3} & \qw
\end{quantikz}
=
\begin{quantikz}
    & \gate{E_4} & \qw
\end{quantikz}}

\scalebox{0.5}{
\begin{quantikz}
    & \gate{S} & \gate{E_4} & \qw
\end{quantikz}
=
\begin{quantikz}
    & \gate{E_1} & \qw
\end{quantikz}}

\end{minipage}

\end{minipage}

\end{center}
\end{lemma}

	\begin{lemma}\label{lem:sb_sel} \cite[Figure 4]{Sel} The following rewriting rules hold:
		\begin{center}
			\begin{minipage}{0.4\textwidth}
				\scalebox{0.55}{
					\begin{quantikz}
						& \gate{S} & \gate[2]{B_1}  &\qw  \\
						&\qw       &             &\qw 
					\end{quantikz}
					=
					\begin{quantikz}
						& \gate[2]{B_1} &\qw        &\qw        &\qw        &\qw  \\
						&            & \gate{H}  & \gate{S}  &\gate{H}   &\qw  
				\end{quantikz}}
				\scalebox{0.55}{
					\begin{quantikz}
						& \gate{S} & \gate[2]{B_2}  &\qw  \\
						&\qw       &             &\qw 
					\end{quantikz}
					=
					\begin{quantikz}
						& \gate[2]{B_3} & \gate{X}       &\qw        &\qw        &\qw    &\qw  &\qw\\
						&            & \gate{S}  & \gate{S}  &\gate{S}   &  \gate{H}&\gate{S}&\qw
				\end{quantikz}}
				\scalebox{0.5}{
					\begin{quantikz}
						& \gate{S} & \gate[2]{B_3}  &\qw  \\
						&\qw       &             &\qw 
					\end{quantikz}
					=
					\begin{quantikz}
						& \gate[2]{B_2} &\qw        &\qw        &\qw        &\qw  \\
						&            & \gate{S}  & \gate{H}  &\gate{S}   &\qw  
				\end{quantikz}}
				\scalebox{0.5}{
					\begin{quantikz}
						& \gate{S} & \gate[2]{B_4}  &\qw  \\
						&\qw       &             &\qw 
					\end{quantikz}
					=
					\begin{quantikz}
						& \gate[2]{B_4} &\qw        &\qw        &\qw        &\qw  \\
						&            & \gate{H}  & \gate{S}  &\gate{H}   &\qw  
				\end{quantikz}}
			\end{minipage}
\begin{minipage}{0.4\textwidth}
	\scalebox{0.5}{
		\begin{quantikz}
			& \gate{H} & \gate[2]{B_1} & \qw \\
			& \qw      &                & \qw
		\end{quantikz}
		=
		\begin{quantikz}
			& \gate[2]{B_1} & \qw      & \qw \\
			&                & \gate{H} & \qw
		\end{quantikz}
	}

	\scalebox{0.5}{
		\begin{quantikz}
			& \gate{H} & \gate[2]{B_2} & \qw \\
			& \qw      &                & \qw
		\end{quantikz}
		=
		\begin{quantikz}
			& \gate[2]{B_4} & \qw \\
			&                & \qw
		\end{quantikz}
	}

	\scalebox{0.5}{
		\begin{quantikz}
			& \gate{H} & \gate[2]{B_3} & \qw \\
			& \qw      &                & \qw
		\end{quantikz}
		=
		\begin{quantikz}
			& \gate[2]{B_3}
			& \gate{X}
			& \qw
			& \qw
			& \qw
			& \qw
			& \qw \\
			&
			& \gate{S}
			& \gate{S}
			& \gate{S}
			& \gate{H}
			& \gate{S}
			& \qw
		\end{quantikz}
	}

	\scalebox{0.5}{
		\begin{quantikz}
			& \gate{H} & \gate[2]{B_4} & \qw \\
			& \qw      &                & \qw
		\end{quantikz}
		=
		\begin{quantikz}
			& \gate[2]{B_2} & \qw \\
			&                & \qw
		\end{quantikz}
	}
\end{minipage}		\end{center}
	\end{lemma}
	
	\begin{lemma}\label{lem:sd_sel}\cite[Figure 6]{Sel} The following rewriting rules hold:
		\begin{center}
			\begin{minipage}{0.4\textwidth}
				\scalebox{0.5}{
					\begin{quantikz}
						&\qw          &   \gate[2]{D_1}  &\qw  \\
						& \gate{S} &            &\qw 
					\end{quantikz}
					=
					\begin{quantikz}
						& \gate[2]{D_1} & \gate{H}       &\gate{S}&\gate{H}&\qw          \\
						&            &\qw   &\qw   &\qw&\qw
				\end{quantikz}}
				\scalebox{0.5}{
					\begin{quantikz}
						&\qw          &   \gate[2]{D_2}  &\qw  \\
						& \gate{S} &            &\qw 
					\end{quantikz}
					=
					\begin{quantikz}
						& \gate[2]{D_3} & \gate{S}       &\gate{S}&\gate{S}& \gate{H} &\gate{S}&\qw         \\
						&            &  \gate{S} & \gate{S}  &\qw&\qw&\qw&\qw
				\end{quantikz}}
				\scalebox{0.5}{
					\begin{quantikz}
						&\qw          &   \gate[2]{D_3}  &\qw  \\
						& \gate{S} &            &\qw 
					\end{quantikz}
					=
					\begin{quantikz}
						& \gate[2]{D_2} & \gate{S}       &\gate{H}&\gate{S}&\qw          \\
						&            &\qw   &\qw   &\qw&\qw
				\end{quantikz}}
				\scalebox{0.5}{
					\begin{quantikz}
						&\qw          &   \gate[2]{D_4}  &\qw  \\
						& \gate{S} &            &\qw 
					\end{quantikz}
					=
					\begin{quantikz}
						& \gate[2]{D_4} & \gate{H}       &\gate{S}&\gate{H}&\qw          \\
						&            &\qw   &\qw   &\qw&\qw
				\end{quantikz}}
			\end{minipage}
\begin{minipage}{0.4\textwidth}
	\scalebox{0.5}{
		\begin{quantikz}
			& \qw      & \gate[2]{D_1} & \qw \\
			& \gate{H} &                & \qw
		\end{quantikz}
		=
		\begin{quantikz}
			& \gate[2]{D_1} & \gate{H} & \qw \\
			&                & \qw      & \qw
		\end{quantikz}
	}

	\scalebox{0.5}{
		\begin{quantikz}
			& \qw      & \gate[2]{D_2} & \qw \\
			& \gate{H} &                & \qw
		\end{quantikz}
		=
		\begin{quantikz}
			& \gate[2]{D_4} & \qw \\
			&                & \qw
		\end{quantikz}
	}

	\scalebox{0.5}{
		\begin{quantikz}
			& \qw      & \gate[2]{D_3} & \qw \\
			& \gate{H} &                & \qw
		\end{quantikz}
		=
		\begin{quantikz}
			& \gate[2]{D_3}
			& \gate{S}
			& \gate{S}
			& \gate{S}
			& \gate{H}
			& \gate{S}
			& \qw \\
			&
			& \gate{S}
			& \gate{S}
			& \qw
			& \qw
			& \qw
			& \qw
		\end{quantikz}
	}

	\scalebox{0.5}{
		\begin{quantikz}
			& \qw      & \gate[2]{D_4} & \qw \\
			& \gate{H} &                & \qw
		\end{quantikz}
		=
		\begin{quantikz}
			& \gate[2]{D_2} & \qw \\
			&                & \qw
		\end{quantikz}
	}
\end{minipage}
		\end{center}
	\end{lemma}

\begin{lemma} \cite[Figure 6]{Sel}\label{lem:sd_propagation}
For every $j\in\{1,2,3,4\}$, the following rewriting rule holds:
\begin{center}
	\scalebox{0.55}{
		\begin{quantikz}
			& \gate{S} & \gate[2]{D_j} & \qw \\
			& \qw      &               & \qw
		\end{quantikz}
		=
		\begin{quantikz}
			& \gate[2]{D_j} & \qw      & \qw \\
			&               & \gate{S} & \qw
		\end{quantikz}
	}
\end{center}
\end{lemma}

The process of transforming $\nf(f)\nf(g)$ into $\nf(fg)$ serves as a proof of the equality
\begin{align*}
	\nf(f)\nf(g)=\nf(fg).
\end{align*}
Selinger discovered a set of relations \cite[Figure 8]{Sel} that can derive all the rewriting rules, thus offering a presentation of $\cC_n$, which is provided in Appendix \ref{sec:appendix}.

The normal form of identity $1\in\cC_n$ is given by
\begin{equation*}
	\scalebox{0.55}{
		\begin{quantikz}
			&\gate[5]{\nf(1)}&\qw\\
			&&\qw\\
			&&\qw\\
			&&\qw\\
			&&\qw
		\end{quantikz}= 
		\begin{quantikz}
			&\gate[5]{\up In}&\gate[4]{\up I{n-1}}&\push{\cdots}&\gate[2]{\up I2}&\gate{\up I1}&\qw\\
			&                &                    &\push{\cdots}&&\qw          &\qw\\
			&                &                    &\push{\cdots}&\qw&\qw          &\qw\\
			&                &                    &\qw          &\qw&\qw          &\qw\\
			&                &\qw                 &\qw          &\qw&\qw          &\qw 
		\end{quantikz}
		
	}
\end{equation*}
where $\up I1 = A_1 C_1 E_1$ and for each $i \in \{n, \dots, 2\}$, $\up Ii \in \cC_i$ is
\begin{equation*}
	\scalebox{0.55}{	
		\begin{quantikz}
			&\gate[5]{\up Ii}&\qw\\
			&&\qw\\
			&&\qw\\
			&&\qw\\
			&&\qw
		\end{quantikz}= 
		\begin{quantikz}
			&\qw&\qw&\qw&\qw&\gate[2]{B_1}&\gate{C_1}&\gate[2]{D_1}&\qw&\qw&\qw&\qw&\qw\\
			&\qw&\qw&\qw&\push{\cdots}&\qw&\qw&\qw&\push{\cdots}&\qw&\qw&\qw&\qw\\
			&\qw&\qw&\gate[2]{B_1}&\push{\cdots}&\qw&\qw&\qw&\push{\cdots}&\gate[2]{D_1}&\qw&\qw&\qw\\
			&\qw&\gate[2]{B_1}&&\qw&\qw&\qw&\qw&\qw&&\gate[2]{D_1}&\qw&\qw\\
			&\gate{A_1}&&\qw&\qw&\qw&\qw&\qw&\qw&\qw&&\gate{E_1}&
		\end{quantikz}	
	}.
\end{equation*}
Since multiplying the generators of $\cC_n$ with a normal form produces a dirty normal form, the process of converting any word $g = g_1 \cdots g_m \in \cC_n$ into its normal form, $\nf(g)$, proceeds as follows:
\begin{align*}
	g&=g_1\cdots g_m\cdot \nf(1)=g_1\cdots g_{m-1}\cdot \nf(g_m)=g_1\cdots g_{m-2} \cdot \nf(g_{m-1}g_m)\\
	&=\cdots=\nf(g_1\cdots g_m).
\end{align*}

We next prove the following lemma, which allows us to regard
$\cC_m$ as a subgroup of $\cC_n$ whenever $m\leq n$.
Although this inclusion is natural, it is not immediate from the
presentation of $\cC_n$, since one must verify that the relations
involving the remaining qubits impose no additional relations on
the generators of $\cC_m$.

\begin{lemma} The $\cC_{n-1}$ is isomorphic to the following subgroup of $\cC_n$:
	\begin{align*} \geng{h_1, \dots, h_{n-1}}{s_1, \dots, s_{n-1}}{\Lambda_1, \dots, \Lambda_{n-2}} . \end{align*}
\end{lemma}

\begin{proof} Let $\cG_{n-1}$ be the subgroup as described in the statement. It is easy to see that the map $\cC_{n-1} \to \cG_{n-1}$, which sends $h_i \mapsto h_i$, $s_i \mapsto s_i$, and $\Lambda_j \mapsto \Lambda_j$ for all relevant indices, is a surjective homomorphism.
	
	Every element $g \in \cG_{n-1}$ can be expressed in the form \begin{align*} \nf(g) = \up{I}n \nf'(g), \end{align*} where $\nf'(g)$ is the normal form of $g$ within $\cC_{n-1}$. In other words, the orders of $\cG_{n-1}$ and $\cC_{n-1}$ are the same, which completes the proof.
\end{proof}

We conclude this section by recording several identities that will be used
in the subsequent sections. These identities also serve to illustrate the
rewriting process described above. 
All of these identities involve only two qubits and may therefore be
regarded as identities in $\cC_2$. Consequently, they can be verified
either by direct matrix computation in ${\rm Cliff}_2$, up to global phase,
or by using GAP4 to compute in $\cC_2$.

\begin{lemma}  \label{lem:ssb}The following rewriting rules hold: ($U=h_2s_2s_2h_2s_2s_2$) 		\begin{center}
			\begin{minipage}{0.4\textwidth}
				\scalebox{0.55}{
					\begin{quantikz}
						&\gate{S}& \gate{S} & \gate[2]{B_1}  &\qw  \\
						&\qw&\qw       &             &\qw 
					\end{quantikz}
					=
					\begin{quantikz}
						& \gate[2]{B_1} &\qw        &\qw        &\qw        &\qw  &\qw\\
						&            & \gate{H}  & \gate{S}  &\gate{S}   & \gate{H} &\qw
				\end{quantikz}}
				\scalebox{0.55}{
					\begin{quantikz}
						&\gate{S}& \gate{S} & \gate[2]{B_2}  &\qw  \\
						&\qw     &\qw       &                &\qw 
					\end{quantikz}
					=
					\begin{quantikz}
						& \gate[2]{B_2} & \gate{X}       &\qw        &\qw\\
						&               & \gate{S}  & \gate{S}  &\qw
				\end{quantikz}}
				\scalebox{0.55}{
					\begin{quantikz}
						&\gate{S}& \gate{S} & \gate[2]{B_3}  &\qw  \\
						&\qw     &\qw       &             &\qw 
					\end{quantikz}
					=
					\begin{quantikz}
						& \gate[2]{B_3} & \gate{X}       &\qw         \\
						&            & \gate{U}  &\qw 
				\end{quantikz}}
				\scalebox{0.55}{
					\begin{quantikz}
						&\gate{S}& \gate{S} & \gate[2]{B_4}  &\qw  \\
						&\qw     &\qw       &             &\qw 
					\end{quantikz}
					=
					\begin{quantikz}
						& \gate[2]{B_4} &\qw        &\qw        &\qw  &\qw      &\qw  \\
						&            & \gate{H}  & \gate{S} &\gate{S} &\gate{H}   &\qw  
				\end{quantikz}}
			\end{minipage}
			\begin{minipage}{0.4\textwidth}
				\scalebox{0.55}{
					\begin{quantikz}
						&\gate{H}&\gate{S}&\gate{S}& \gate{H} & \gate[2]{B_1}  &\qw  \\
						&\qw     &\qw     &\qw	   &\qw       &             &\qw 
					\end{quantikz}
					=
					\begin{quantikz}
						& \gate[2]{B_1} &\qw        &\qw  &\qw       \\
						&            & \gate{S}  &\gate{S}  &\qw
				\end{quantikz}}
				\scalebox{0.55}{
					\begin{quantikz}
						&\gate{H}&\gate{S}&\gate{S}& \gate{H} & \gate[2]{B_2}  &\qw  \\
						&\qw     &\qw     &\qw     &\qw       &             &\qw 
					\end{quantikz}
					=
					\begin{quantikz}
						& \gate[2]{B_2} &\qw        &\qw        &\qw  &\qw      &\qw  \\
						&            & \gate{H}  & \gate{S}& \gate{S} &\gate{H}   &\qw  
				\end{quantikz}}
				\scalebox{0.55}{
					\begin{quantikz}
						&\gate{H}&\gate{S}&\gate{S}& \gate{H} & \gate[2]{B_3}  &\qw  \\
						&\qw&\qw&\qw&\qw       &\qw             &\qw 
					\end{quantikz}
					=
					\begin{quantikz}
						& \gate[2]{B_3} & \gate{X}       &\qw        &\qw          \\
						&            & \gate{S}  & \gate{S}  &\qw  
				\end{quantikz}}
				\scalebox{0.55}{
					\begin{quantikz}
						&\gate{H}&\gate{S}&\gate{S}& \gate{H} & \gate[2]{B_4}  &\qw  \\
						&\qw&\qw&\qw&\qw       &             &\qw 
					\end{quantikz}
					=
					\begin{quantikz}
						& \gate[2]{B_4} & \gate{X}  &\qw  &\qw  \\
						&            &  \gate{S}&\gate{S}&\qw
				\end{quantikz}}
			\end{minipage}
		\end{center}
	\end{lemma}

\begin{proof}
We label the identities in the left and right columns by $(L1)$--$(L4)$ and $(R1)$--$(R4)$, respectively, from top to bottom.

Proof of $(L1)$: It follows from Lemma \ref{lem:sb_sel} that 
\[
\scalebox{0.55}{$
\vcenter{\hbox{
\begin{quantikz}[row sep=0.28cm, column sep=0.13cm]
    & \gate{S} & \gate{S} & \gate[2]{B_1} & \qw \\
    & \qw      & \qw      &                & \qw
\end{quantikz}
}}
\;=\;
\vcenter{\hbox{
\begin{quantikz}[row sep=0.28cm, column sep=0.13cm]
    & \gate{S} & \gate[2]{B_1} & \qw      & \qw      & \qw      & \qw \\
    & \qw      &                & \gate{H} & \gate{S} & \gate{H} & \qw
\end{quantikz}
}}
\;=\;
\vcenter{\hbox{
\begin{quantikz}[row sep=0.28cm, column sep=0.13cm]
    & \gate[2]{B_1} & \qw      & \qw      & \qw
                      & \qw      & \qw      & \qw&\qw \\
    &                & \gate{H} & \gate{S} & \gate{H}
                      & \gate{H} & \gate{S} & \gate{H}&\qw
\end{quantikz}
}}
\;=\;
\vcenter{\hbox{
\begin{quantikz}[row sep=0.28cm, column sep=0.13cm]
    & \gate[2]{B_1} & \qw      & \qw      & \qw      & \qw &\qw\\
    &                & \gate{H} & \gate{S} & \gate{S} & \gate{H}&\qw
\end{quantikz}
}}
.
$}
\]

Proof of $(L2)$: It follows from Lemma \ref{lem:sb_sel} that 
\[
\scalebox{0.55}{$
\vcenter{\hbox{
\begin{quantikz}[row sep=0.28cm, column sep=0.11cm]
    & \gate{S}
    & \gate{S}
    & \gate[2]{B_2}
    & \gate{X}
    & \qw \\
    & \qw
    & \qw
    &
    & \qw
    & \qw
\end{quantikz}
}}
\;=\;
\vcenter{\hbox{
\begin{quantikz}[row sep=0.28cm, column sep=0.11cm]
    & \gate{S}
    & \gate[2]{B_3}
    & \gate{X}
    & \qw
    & \qw
    & \qw
    & \qw \\
    & \qw
    &
    & \gate{S^3}
    & \gate{H}
    & \gate{S}
    & \qw
    & \qw
\end{quantikz}
}}
\;=\;
\vcenter{\hbox{
\begin{quantikz}[row sep=0.28cm, column sep=0.11cm]
    & \gate[2]{B_2}
    & \gate{X}
    & \qw
    & \qw
    & \qw
    & \qw
    & \qw
    & \qw \\
    &
    & \gate{S}
    & \gate{H}
    & \gate{S}
    & \gate{S^3}
    & \gate{H}
    & \gate{S}
    & \qw
\end{quantikz}
}}
\;=\;
\vcenter{\hbox{
\begin{quantikz}[row sep=0.28cm, column sep=0.11cm]
    & \gate[2]{B_2}
    & \gate{X}
    & \qw
    & \qw \\
    &
    & \gate{S}
    & \gate{S}
    & \qw
\end{quantikz}
}}
.
$}
\]

Proof of $(L3)$: It follows from Lemma \ref{lem:sb_sel} that 

\[
\scalebox{0.55}{$
\vcenter{\hbox{
\begin{quantikz}[row sep=0.28cm, column sep=0.11cm]
    & \gate{S}
    & \gate{S}
    & \gate[2]{B_3}
    & \qw \\
    & \qw
    & \qw
    &
    & \qw
\end{quantikz}
}}
\;=\;
\vcenter{\hbox{
\begin{quantikz}[row sep=0.28cm, column sep=0.11cm]
    & \gate{S}
    & \gate[2]{B_2}
    & \qw
    & \qw
    & \qw
    & \qw \\
    & \qw
    &
    & \gate{S}
    & \gate{H}
    & \gate{S}
    & \qw
\end{quantikz}
}}
\;=\;
\vcenter{\hbox{
\begin{quantikz}[row sep=0.28cm, column sep=0.11cm]
    & \gate[2]{B_3}
    & \gate{X}
    & \qw
    & \qw
    & \qw
    & \qw
    & \qw
    & \qw \\
    &
    & \gate{S^3}
    & \gate{H}
    & \gate{S^2}
    & \gate{H}
    & \gate{S}
    & \qw
    & \qw
\end{quantikz}
}}
.
$}
\]
Finally, note that  $s_2^3h_2s_2^2h_2s_2=h_2s_2^2h_2s_2^2$.

Proof of $(L4)$: It follows from Lemma \ref{lem:sb_sel} that 
\[
\scalebox{0.55}{$
\vcenter{\hbox{
\begin{quantikz}[row sep=0.28cm, column sep=0.11cm]
    & \gate{S}
    & \gate{S}
    & \gate[2]{B_4}
    & \qw \\
    & \qw
    & \qw
    &
    & \qw
\end{quantikz}
}}
\;=\;
\vcenter{\hbox{
\begin{quantikz}[row sep=0.28cm, column sep=0.11cm]
    & \gate{S}
    & \gate[2]{B_4}
    & \qw
    & \qw
    & \qw
    & \qw \\
    & \qw
    &
    & \gate{H}
    & \gate{S}
    & \gate{H}
    & \qw
\end{quantikz}
}}
\;=\;
\vcenter{\hbox{
\begin{quantikz}[row sep=0.28cm, column sep=0.11cm]
    & \gate[2]{B_4}
    & \qw
    & \qw
    & \qw
    & \qw
    & \qw
    & \qw&\qw \\
    &
    & \gate{H}
    & \gate{S}
    & \gate{H}
    & \gate{H}
    & \gate{S}
    & \gate{H}&\qw
\end{quantikz}
}}
\;=\;
\vcenter{\hbox{
\begin{quantikz}[row sep=0.28cm, column sep=0.11cm]
    & \gate[2]{B_4}
    & \qw
    & \qw
    & \qw
    & \qw&\qw \\
    &
    & \gate{H}
    & \gate{S}
    & \gate{S}
    & \gate{H}&\qw
\end{quantikz}
}}
.
$}
\]

Proof of $(R1)$: It follows from Lemma \ref{lem:sb_sel} that 
\[
\scalebox{0.55}{$
\begin{aligned}
&
\vcenter{\hbox{
\begin{quantikz}[row sep=0.28cm, column sep=0.09cm]
    & \gate{H} & \gate{S} & \gate{S} & \gate{H}
    & \gate[2]{B_1} & \qw \\
    & \qw & \qw & \qw & \qw & & \qw
\end{quantikz}
}}
\;=\;
\vcenter{\hbox{
\begin{quantikz}[row sep=0.28cm, column sep=0.09cm]
    & \gate{H} & \gate{S} & \gate{S}
    & \gate[2]{B_1} & \qw & \qw \\
    & \qw & \qw & \qw & & \gate{H}& \qw 
\end{quantikz}
}}
\;=\;
\vcenter{\hbox{
\begin{quantikz}[row sep=0.28cm, column sep=0.09cm]
    & \gate{H} & \gate{S}
    & \gate[2]{B_1}
    & \qw & \qw & \qw & \qw & \qw \\
    & \qw & \qw
    &
    & \gate{H} & \gate{S} & \gate{H} & \gate{H}& \qw 
\end{quantikz}
}}
\\[0.6em]
&
\phantom{
\vcenter{\hbox{
\begin{quantikz}[row sep=0.28cm, column sep=0.09cm]
    & \gate{H} & \gate{S} & \gate{S} & \gate{H}
    & \gate[2]{B_1} & \qw& \qw  \\
    & \qw & \qw & \qw & \qw & & \qw& \qw 
\end{quantikz}
}}
}
\;=\;
\vcenter{\hbox{
\begin{quantikz}[row sep=0.28cm, column sep=0.09cm]
    & \gate{H}
    & \gate[2]{B_1}
    & \qw & \qw & \qw & \qw & \qw & \qw& \qw & \qw  \\
    & \qw
    &
    & \gate{H} & \gate{S} & \gate{H}
    & \gate{H} & \gate{S} & \gate{H}&\gate{H}& \qw 
\end{quantikz}
}}
\;=\;
\vcenter{\hbox{
\begin{quantikz}[row sep=0.28cm, column sep=0.09cm]
    & \gate[2]{B_1}
    & \qw & \qw & \qw & \qw & \qw & \qw & \qw& \qw & \qw  \\
    &
    & \gate{H} & \gate{H} & \gate{S} & \gate{H}
    & \gate{H} & \gate{S} & \gate{H}&\gate{H}& \qw 
\end{quantikz}
}}
\;=\;
\vcenter{\hbox{
\begin{quantikz}[row sep=0.28cm, column sep=0.09cm]
    & \gate[2]{B_1} & \qw & \qw & \qw \\
    & & \gate{S} & \gate{S}& \qw 
\end{quantikz}
}}
.
\end{aligned}
$}
\]

Proof of $(R2)$: It follows from Lemma \ref{lem:sb_sel} that 
\[
\scalebox{0.55}{$
\begin{aligned}
\vcenter{\hbox{
\begin{quantikz}[row sep=0.28cm, column sep=0.09cm]
    & \gate{H} & \gate{S} & \gate{S} & \gate{H}
    & \gate[2]{B_2} & \qw \\
    & \qw & \qw & \qw & \qw & & \qw
\end{quantikz}
}}
&=
\vcenter{\hbox{
\begin{quantikz}[row sep=0.28cm, column sep=0.09cm]
    & \gate{H} & \gate{S} & \gate{S}
    & \gate[2]{B_4} & \qw \\
    & \qw & \qw & \qw & & \qw
\end{quantikz}
}}
\;=\;
\vcenter{\hbox{
\begin{quantikz}[row sep=0.28cm, column sep=0.09cm]
    & \gate{H} & \gate{S}
    & \gate[2]{B_4} & \qw & \qw & \qw & \qw\\
    & \qw & \qw & & \gate{H} & \gate{S} & \gate{H}& \qw
\end{quantikz}
}}
\\[0.6em]
\phantom{
\vcenter{\hbox{
\begin{quantikz}[row sep=0.28cm, column sep=0.09cm]
    & \gate{H} & \gate{S} & \gate{S} & \gate{H}
    & \gate[2]{B_2} & \qw \\
    & \qw & \qw & \qw & \qw & & \qw
\end{quantikz}
}}
}
&=
\vcenter{\hbox{
\begin{quantikz}[row sep=0.28cm, column sep=0.09cm]
    & \gate{H}
    & \gate[2]{B_4}
    & \qw & \qw & \qw & \qw & \qw & \qw& \qw \\
    & \qw & & \gate{H} & \gate{S} & \gate{H}
    & \gate{H} & \gate{S} & \gate{H}& \qw
\end{quantikz}
}}
\;=\;
\vcenter{\hbox{
\begin{quantikz}[row sep=0.28cm, column sep=0.09cm]
    & \gate[2]{B_2}
    & \qw & \qw & \qw & \qw & \qw & \qw & \qw\\
    & & \gate{H} & \gate{S} & \gate{H}
    & \gate{H} & \gate{S} & \gate{H}& \qw
\end{quantikz}
}}
\;=\;
\vcenter{\hbox{
\begin{quantikz}[row sep=0.28cm, column sep=0.09cm]
    & \gate[2]{B_2}
    & \qw & \qw & \qw & \qw & \qw\\
    & & \gate{H} & \gate{S} & \gate{S} & \gate{H}& \qw
\end{quantikz}
}}
.
\end{aligned}
$}
\]

Proof of $(R3)$: It follows from Lemma \ref{lem:sb_sel} that 
\[
\scalebox{0.55}{$
\begin{aligned}
&
\vcenter{\hbox{
\begin{quantikz}[row sep=0.28cm, column sep=0.08cm]
    & \gate{H}
    & \gate{S}
    & \gate{S}
    & \gate{H}
    & \gate[2]{B_3}
    & \qw \\
    & \qw & \qw & \qw & \qw & & \qw
\end{quantikz}
}}
\;=\;
\vcenter{\hbox{
\begin{quantikz}[row sep=0.28cm, column sep=0.08cm]
    & \gate{H}
    & \gate{S}
    & \gate{S}
    & \gate[2]{B_3}
    & \gate{X}
    & \qw & \qw & \qw \\
    & \qw & \qw & \qw
    &
    & \gate{S^3}
    & \gate{H}
    & \gate{S}
    & \qw
\end{quantikz}
}}
\;=\;
\vcenter{\hbox{
\begin{quantikz}[row sep=0.28cm, column sep=0.08cm]
    & \gate{H}
    & \gate{S}
    & \gate[2]{B_2}
    & \gate{X}
    & \qw & \qw & \qw
    & \qw & \qw & \qw \\
    & \qw & \qw
    &
    & \gate{S}
    & \gate{H}
    & \gate{S}
    & \gate{S^3}
    & \gate{H}
    & \gate{S}
    & \qw
\end{quantikz}
}}
\\[0.7em]
&
\qquad =
\vcenter{\hbox{
\begin{quantikz}[row sep=0.28cm, column sep=0.08cm]
    & \gate{H}
    & \gate[2]{B_3}
    & \gate{X}
    & \gate{X}
    & \qw & \qw & \qw
    & \qw & \qw & \qw
    & \qw & \qw  \\
    & \qw
    &
    & \gate{S^3}
    & \gate{H}
    & \gate{S}
    & \gate{S}
    & \gate{H}
    & \gate{S}
    & \gate{S^3}
    & \gate{H}
    & \gate{S}
    & \qw
\end{quantikz}
}}
\;=\;
\vcenter{\hbox{
\begin{quantikz}[row sep=0.28cm, column sep=0.08cm]
    & \gate[2]{B_3}
    & \gate{X}
    & \gate{X}
    & \gate{X}
    & \qw & \qw & \qw
    & \qw & \qw & \qw
    & \qw & \qw & \qw
    & \qw & \qw \\
    &
    & \gate{S^3}
    & \gate{H}
    & \gate{S}
    & \gate{S^3}
    & \gate{H}
    & \gate{S}
    & \gate{S}
    & \gate{H}
    & \gate{S}
    & \gate{S^3}
    & \gate{H}
    & \gate{S}
    & \qw
    & \qw
\end{quantikz}
}}
\;=\;
\vcenter{\hbox{
\begin{quantikz}[row sep=0.28cm, column sep=0.08cm]
    & \gate[2]{B_3}
    & \gate{X}
    & \qw & \qw \\
    &
    & \gate{S}
    & \gate{S}
    & \qw
\end{quantikz}
}}
.
\end{aligned}
$}
\]

Proof of $(R4)$: It follows from Lemma \ref{lem:sb_sel} that 
\[
\scalebox{0.55}{$
\vcenter{\hbox{
\begin{quantikz}[row sep=0.28cm, column sep=0.09cm]
    & \gate{H}
    & \gate{S}
    & \gate{S}
    & \gate{H}
    & \gate[2]{B_4}
    & \qw \\
    & \qw & \qw & \qw & \qw & & \qw
\end{quantikz}
}}
\;=\;
\vcenter{\hbox{
\begin{quantikz}[row sep=0.28cm, column sep=0.09cm]
    & \gate{H}
    & \gate{S}
    & \gate{S}
    & \gate[2]{B_2}
    & \qw \\
    & \qw & \qw & \qw & & \qw
\end{quantikz}
}}
\;=\;
\vcenter{\hbox{
\begin{quantikz}[row sep=0.28cm, column sep=0.09cm]
    & \gate{H}
    & \gate{S}
    & \gate[2]{B_3}
    & \gate{X}
    & \qw
    & \qw
    & \qw \\
    & \qw
    & \qw
    &
    & \gate{S^3}
    & \gate{H}
    & \gate{S}
    & \qw
\end{quantikz}
}}
\;=\;
\vcenter{\hbox{
\begin{quantikz}[row sep=0.28cm, column sep=0.09cm]
    & \gate{H}
    & \gate[2]{B_2}
    & \gate{X}
    & \qw
    & \qw
    & \qw
    & \qw
    & \qw
    & \qw \\
    & \qw
    &
    & \gate{S}
    & \gate{H}
    & \gate{S}
    & \gate{S^3}
    & \gate{H}
    & \gate{S}
    & \qw
\end{quantikz}
}}
\;=\;
\vcenter{\hbox{
\begin{quantikz}[row sep=0.28cm, column sep=0.09cm]
    & \gate[2]{B_4}
    & \gate{X}
    & \qw
    & \qw \\
    &
    & \gate{S}
    & \gate{S}
    & \qw
\end{quantikz}
}}
.
$}
\]
\end{proof}

\begin{lemma} \label{lem:ssd} The following rewriting rules hold: ($U=h_1s_1s_1h_1s_1s_1$)
		\begin{center}
			\begin{minipage}{0.4\textwidth}
				\scalebox{0.55}{
					\begin{quantikz}
						&\qw         &\qw &   \gate[2]{D_1}  &\qw  \\
						& \gate{S}&\gate{S} &            &\qw 
					\end{quantikz}
					=
					\begin{quantikz}
						& \gate[2]{D_1} & \gate{H}    &\gate{S}   &\gate{S}&\gate{H}&\qw          \\
						&            &\qw   &\qw &\qw  &\qw&\qw
				\end{quantikz}}
				\scalebox{0.55}{
					\begin{quantikz}
						&\qw         &\qw &   \gate[2]{D_2}  &\qw  \\
						& \gate{S}&\gate{S} &            &\qw 
					\end{quantikz}
					=
					\begin{quantikz}
						& \gate[2]{D_2} & \gate{S}       &\gate{S}&\qw       \\
						&            &  \gate{S} & \gate{S}  &\qw
				\end{quantikz}}
				\scalebox{0.55}{
					\begin{quantikz}
						&\qw         &\qw &   \gate[2]{D_3}  &\qw  \\
						& \gate{S}&\gate{S} &            &\qw 
					\end{quantikz}
					=
					\begin{quantikz}
						& \gate[2]{D_3} & \gate{U}       &\qw &\qw          \\
						&            & \gate{S}  &\gate{S}   &\qw
				\end{quantikz}}
				\scalebox{0.55}{
					\begin{quantikz}
						&\qw         &\qw &   \gate[2]{D_4}  &\qw  \\
						& \gate{S}&\gate{S} &            &\qw 
					\end{quantikz}
					=
					\begin{quantikz}
						& \gate[2]{D_4} & \gate{H}       &\gate{S}&\gate{S}&\gate{H}&\qw          \\
						&            &\qw   &\qw   &\qw&\qw&\qw
				\end{quantikz}}
			\end{minipage}
			\begin{minipage}{0.4\textwidth}
				\scalebox{0.55}{
					\begin{quantikz}
						&\qw         &\qw         &\qw &\qw           &\gate[2]{D_1}  &\qw  \\
						&\gate{H} & \gate{S}& \gate{S}& \gate{H}          &&\qw 
					\end{quantikz}
					=
					\begin{quantikz}
						& \gate[2]{D_1} & \gate{S}    &\gate{S} &\qw        \\
						&            &\qw   &\qw &\qw
				\end{quantikz}}
				\scalebox{0.55}{
					\begin{quantikz}
						&\qw         &\qw         &\qw &\qw           &\gate[2]{D_2}  &\qw  \\
						&\gate{H} & \gate{S}& \gate{S}& \gate{H}          & &\qw
					\end{quantikz}
					=
					\begin{quantikz}
						& \gate[2]{D_2} & \gate{H}       &\gate{S}&\gate{S} &\gate{H}& \qw       \\
						&               &\qw                &\qw         &\qw&\qw        &\qw
				\end{quantikz}}
				\scalebox{0.55}{
					\begin{quantikz}
						&\qw         &\qw         &\qw &\qw           &\gate[2]{D_3}  &\qw  \\
						&\gate{H} & \gate{S}& \gate{S}& \gate{H}          & &\qw
					\end{quantikz}
					=
					\begin{quantikz}
						& \gate[2]{D_3} & \gate{S}       &\gate{S} &\qw        \\
						&               & \gate{S}     & \gate{S} &\qw        
				\end{quantikz}}
				\scalebox{0.55}{
					\begin{quantikz}
						&\qw         &\qw         &\qw &\qw           &\gate[2]{D_4}  &\qw  \\
						&\gate{H} & \gate{S}& \gate{S}& \gate{H}          & &\qw
					\end{quantikz}
					=
					\begin{quantikz}
						& \gate[2]{D_4} & \gate{S}       &\gate{S} &\qw        \\
						&               & \gate{S}     & \gate{S} &\qw        
				\end{quantikz}}
			\end{minipage}
		\end{center}
	\end{lemma}
\begin{proof}
The proof is analogous to that of Lemma \ref{lem:ssb}, except that we use the rewriting rules in Lemma \ref{lem:sd_sel}.
\end{proof}

	\begin{lemma}\label{lem:sb} The following rewriting rules hold:
		\begin{center}
			\begin{minipage}{0.4\textwidth}
				\scalebox{0.55}{
					\begin{quantikz}
						& \gate{S} & \gate[2]{B_1}  &\qw  \\
						&\qw       &             &\qw 
					\end{quantikz}
					=
					\begin{quantikz}
						& \gate[2]{B_1} &\qw        &\qw        &\qw        &\qw  \\
						&            & \gate{H}  & \gate{S}  &\gate{H}   &\qw  
				\end{quantikz}}
				\scalebox{0.55}{
					\begin{quantikz}
						& \gate{S} & \gate[2]{B_2}  &\qw  \\
						&\qw       &             &\qw 
					\end{quantikz}
					=
					\begin{quantikz}
						& \gate[2]{B_3} & \gate{X}       &\qw        &\qw        &\qw    &\qw  &\qw\\
						&            & \gate{S}  & \gate{S}  &\gate{S}   &  \gate{H}&\gate{S}&\qw
				\end{quantikz}}
				\scalebox{0.5}{
					\begin{quantikz}
						& \gate{S} & \gate[2]{B_3}  &\qw  \\
						&\qw       &             &\qw 
					\end{quantikz}
					=
					\begin{quantikz}
						& \gate[2]{B_2} &\qw        &\qw        &\qw        &\qw  \\
						&            & \gate{S}  & \gate{H}  &\gate{S}   &\qw  
				\end{quantikz}}
				\scalebox{0.5}{
					\begin{quantikz}
						& \gate{S} & \gate[2]{B_4}  &\qw  \\
						&\qw       &             &\qw 
					\end{quantikz}
					=
					\begin{quantikz}
						& \gate[2]{B_4} &\qw        &\qw        &\qw        &\qw  \\
						&            & \gate{H}  & \gate{S}  &\gate{H}   &\qw  
				\end{quantikz}}
			\end{minipage}
			\begin{minipage}{0.4\textwidth}
				\scalebox{0.5}{
					\begin{quantikz}
						&\gate{H}&\gate{S}& \gate{H} & \gate[2]{B_1}  &\qw  \\
						&\qw&\qw&\qw       &             &\qw 
					\end{quantikz}
					=
					\begin{quantikz}
						& \gate[2]{B_1} &\qw        &\qw          \\
						&            & \gate{S}  &\qw  
				\end{quantikz}}
				\scalebox{0.5}{
					\begin{quantikz}
						&\gate{H}&\gate{S}& \gate{H} & \gate[2]{B_2}  &\qw  \\
						&\qw&\qw&\qw       &             &\qw 
					\end{quantikz}
					=
					\begin{quantikz}
						& \gate[2]{B_2} &\qw        &\qw        &\qw        &\qw  \\
						&            & \gate{H}  & \gate{S}  &\gate{H}   &\qw  
				\end{quantikz}}
				\scalebox{0.5}{
					\begin{quantikz}
						&\gate{H}&\gate{S}& \gate{H} & \gate[2]{B_3}  &\qw  \\
						&\qw&\qw&\qw       &             &\qw 
					\end{quantikz}
					=
					\begin{quantikz}
						& \gate[2]{B_4} & \gate{X}       &\qw        &\qw          \\
						&            & \gate{S}  & \gate{S}  &\qw  
				\end{quantikz}}
				\scalebox{0.5}{
					\begin{quantikz}
						&\gate{H}&\gate{S}& \gate{H} & \gate[2]{B_4}  &\qw  \\
						&\qw&\qw&\qw       &             &\qw 
					\end{quantikz}
					=
					\begin{quantikz}
						& \gate[2]{B_3} &\qw       \\
						&            &\qw  
				\end{quantikz}}
			\end{minipage}
		\end{center}
	\end{lemma}
	\begin{proof}
		The proof is analogous to that of Lemma \ref{lem:ssb}.
	\end{proof}

	\begin{lemma}\label{lem:sd} The following rewriting rules hold:
		\begin{center}
			\begin{minipage}{0.4\textwidth}
				\scalebox{0.5}{
					\begin{quantikz}
						&\qw          &   \gate[2]{D_1}  &\qw  \\
						& \gate{S} &            &\qw 
					\end{quantikz}
					=
					\begin{quantikz}
						& \gate[2]{D_1} & \gate{H}       &\gate{S}&\gate{H}&\qw          \\
						&            &\qw   &\qw   &\qw&\qw
				\end{quantikz}}
				\scalebox{0.5}{
					\begin{quantikz}
						&\qw          &   \gate[2]{D_2}  &\qw  \\
						& \gate{S} &            &\qw 
					\end{quantikz}
					=
					\begin{quantikz}
						& \gate[2]{D_3} & \gate{S}       &\gate{S}&\gate{S}& \gate{H} &\gate{S}&\qw         \\
						&            &  \gate{S} & \gate{S}  &\qw&\qw&\qw&\qw
				\end{quantikz}}
				\scalebox{0.5}{
					\begin{quantikz}
						&\qw          &   \gate[2]{D_3}  &\qw  \\
						& \gate{S} &            &\qw 
					\end{quantikz}
					=
					\begin{quantikz}
						& \gate[2]{D_2} & \gate{S}       &\gate{H}&\gate{S}&\qw          \\
						&            &\qw   &\qw   &\qw&\qw
				\end{quantikz}}
				\scalebox{0.5}{
					\begin{quantikz}
						&\qw          &   \gate[2]{D_4}  &\qw  \\
						& \gate{S} &            &\qw 
					\end{quantikz}
					=
					\begin{quantikz}
						& \gate[2]{D_4} & \gate{H}       &\gate{S}&\gate{H}&\qw          \\
						&            &\qw   &\qw   &\qw&\qw
				\end{quantikz}}
			\end{minipage}
			\begin{minipage}{0.4\textwidth}
				\scalebox{0.5}{
					\begin{quantikz}
						&\qw         &\qw          &\qw           &\gate[2]{D_1}  &\qw  \\
						&\gate{H} & \gate{S} & \gate{H}          &\qw 
					\end{quantikz}
					=
					\begin{quantikz}
						& \gate[2]{D_1} & \gate{S}       &\qw        \\
						&            &\qw   &\qw 
				\end{quantikz}}
				\scalebox{0.5}{
					\begin{quantikz}
						&\qw         &\qw          &\qw           &\gate[2]{D_2}  &\qw  \\
						&\gate{H} & \gate{S} & \gate{H}          & &\qw
					\end{quantikz}
					=
					\begin{quantikz}
						& \gate[2]{D_2} & \gate{H}       &\gate{S} &\gate{H}&\qw        \\
						&               &\qw                &\qw         &\qw        &\qw
				\end{quantikz}}
				\scalebox{0.5}{
					\begin{quantikz}
						&\qw         &\qw          &\qw           &\gate[2]{D_3}  &\qw  \\
						&\gate{H} & \gate{S} & \gate{H}          & &\qw
					\end{quantikz}
					=
					\begin{quantikz}
						& \gate[2]{D_4} & \gate{S}       &\gate{S} &\qw        \\
						&\qw               & \gate{S}     & \gate{S} &\qw        
				\end{quantikz}}
				\scalebox{0.5}{
					\begin{quantikz}
						&\qw         &\qw          &\qw           &\gate[2]{D_4}  &\qw  \\
						&\gate{H} & \gate{S} & \gate{H}          & &\qw
					\end{quantikz}
					=
					\begin{quantikz}
						& \gate[2]{D_3} &\qw          \\
						&            &\qw 
				\end{quantikz}}
			\end{minipage}
		\end{center}
	\end{lemma}
	\begin{proof}
		The proof is analogous to that of Lemma \ref{lem:ssb}.
			\end{proof}
	
\section{The Groups $\cC_1$, $\cC_2$, and $\cC_3$}
\label{sec:C1C2}

The proofs in the subsequent sections rely on induction. Their base
cases, together with several auxiliary arguments, require explicit
computations in $\cC_1$, $\cC_2$, and $\cC_3$. The purpose of this
section is merely to collect these computational identities for later
reference. Readers may safely skip this section and refer back to it
when needed.

The identities may be verified either by direct matrix computations in
${\rm Cliff}_1$, ${\rm Cliff}_2$, and ${\rm Cliff}_3$, where equality is understood
up to a global phase, or computationally by constructing $\cC_1$,
$\cC_2$, and $\cC_3$ in GAP4 from the presentation given in
Appendix~\ref{sec:appendix}.

For $n=1$, recall that
\begin{align}
\label{eq:isomC1S4}
\cC_1
=
\langle h_1,s_1
\mid h_1^2=s_1^4=(s_1h_1)^3=1\rangle
\cong {\rm Sym}_4,
\end{align}
via 
$h_1\mapsto(12)$ and 
$
s_1\mapsto(1234)$.

For $i\in\{1,4\}$ and $j\in\{1,2\}$, define
\begin{align*}
\scalebox{0.55}{
    \begin{quantikz}
        &\gate[2]{g_{i,j}}&\qw\\
        &                  &\qw
    \end{quantikz}
}
&\coloneqq
\scalebox{0.55}{
    \begin{quantikz}
        &\gate[2]{B_i}&\gate{C_j}&\gate[2]{D_1}&\qw\\
        &             &\qw       &             &\qw
    \end{quantikz}
},\\[1ex]
\scalebox{0.55}{
    \begin{quantikz}
        &\gate[2]{k_{i,j}}&\qw\\
        &                  &\qw
    \end{quantikz}
}
&\coloneqq
\scalebox{0.55}{
    \begin{quantikz}
        &\gate[2]{B_i}&\gate{C_j}&\gate[2]{D_2}&\qw\\
        &             &\qw       &             &\qw
    \end{quantikz}
}.
\end{align*}

\begin{lemma}\label{lem:tech_1}
The following identities hold in $\cC_2$:
\[
\begin{tabular}{@{}c@{\qquad}c@{}}
$\begin{aligned}
s_1
&=
h_2s_2h_2s_2\Lambda_1h_2s_2h_2\Lambda_1h_2\Lambda_1,\\
g_{1,1}
&=
1,\\
g_{1,2}
&=
h_2s_2^2h_2,\\
g_{4,1}
&=
h_2\Lambda_1h_2,\\
g_{4,2}
&=
h_1s_1h_1s_1^3h_1
h_2\Lambda_1
h_1s_1h_1s_1^3h_1
h_2,
\end{aligned}$
&
$\begin{aligned}
\Lambda_1
&=
h_2g_{4,1}h_2,\\
k_{1,1}h_1
&=
h_2g_{4,1}h_2,\\
k_{1,2}h_1
&=
g_{1,2}h_2g_{4,1}h_2,\\
k_{4,1}h_1
&=
g_{4,1}h_2g_{4,1}h_2,\\
k_{4,2}h_1
&=
g_{4,2}h_2g_{4,1}h_2
\end{aligned}$
\end{tabular}
\]
\end{lemma}

\begin{proof}
These identities can be verified by direct matrix computations in
${\rm Cliff}_2$, where equality is understood up to a global phase.
\end{proof}

\begin{lemma}\label{lem:tech_2}
In $\cC_3$, define
\begin{align*}
M
:=
h_2\Lambda_1s_2^2h_2h_1s_1^2h_2\Lambda_1h_2,
\quad q_1
:=
s_1^3h_1\Lambda_1h_1s_1,\quad
q_2:=
s_1^3h_1\Lambda_2h_1s_1.\end{align*}
Then the following identities hold:
\begin{align*}
h_1s_1^2h_1
&=
M\Lambda_2M^{-1}\Lambda_2h_2\Lambda_2
h_3\Lambda_2h_3M\Lambda_2h_2\Lambda_2h_1h_3
M^{-1}\Lambda_2h_3h_2M^{-2},\\
&=q_1s_2h_2(q_1^{-1}s_2^{-1})^2q_1s_2(q_1h_2)^2q_1s_2h_1h_2s_2^{-1},\\
M
&=
h_2s_2^2h_2s_2q_1h_2s_2q_1,\\
q_1
&=
M\Lambda_2h_2\Lambda_2Mh_3\Lambda_2h_3
M^{-1}\Lambda_2h_2\Lambda_2h_3M^{-1}\Lambda_2h_3\Lambda_2s_2^{-1},\\
&=s_2 h_1s_1^2h_1 h_2Mh_2h_1,\\
q_2 &=
\Lambda_2.
\end{align*}
\end{lemma}

\begin{proof}
These identities can be verified by direct matrix computations in
${\rm Cliff}_3$, where equality is understood up to a global phase.
\end{proof}

\section{The centralizer of $z_1$ gate in $\cC_n$}\label{sec:Css}

Considering the inclusion 
$\cC_m\subseteq \cC_n$ for $m\leq n$, we can derive the following observation.
\begin{lemma}\label{lem:keylem}
	Let $g\in \cC_n$ and $f\in \cC_{m}$ with $m<n$, then 
	\begin{align*}
		\up Ni_{g}=\up Ni_{gf} \quad\text{for all }i\in\{n,\dots,m+1\}.
	\end{align*}
\end{lemma}
\begin{proof}
	Observe that $\up Nm_g\cdots \up N1_gf\in\cC_m$ can be rewritten into its normal form, giving us
	\begin{align*}
		gf		&=\up Nn_g\cdots \up N{m+1}\left(  \up N{m}_g\cdots \up N1_g f\right)\\
		&=\up Nn_g \cdots \up N{m+1}_g\nf\left(\up Nm_g \cdots \up N1_gf\right).
	\end{align*}
The last expression is a normal form of $gf$. By uniqueness,
its factors of levels $m+1,\ldots,n$ must coincide with those of
$g$. Hence
\begin{align*}
N_{gf}^{(i)}=N_g^{(i)}
\qquad (m+1\leq i\leq n).
\end{align*}
\end{proof}

\begin{theorem}\label{thm:|CG(ss)|}
	The order of $\cent_{\cC_n}(s_1^2)$ is given by 
	\begin{align*}
		|\cent_{\cC_n}(s_1^2)|=\begin{cases}
			8,&\text{if }n=1,\\
			\displaystyle{2^{n^2+2n} \prod_{i=1}^{n-1}(2^{2i}-1)},&\text{if }n\geq 2.
		\end{cases}
	\end{align*}
\end{theorem}
\begin{proof}
	Suppose $n=1$. Under the 
isomorphism in (\ref{eq:isomC1S4}), $s_1^2$ corresponds to the double transposition
$(13)(24)$. Since the conjugacy class of a double transposition
in ${\rm Sym}_4$ has size $3$,
\[
\left|\cent_{\cC_1}(s_1^2)\right|
=
\left|\cent_{{\rm Sym}_4}\bigl((13)(24)\bigr)\right|
=
\frac{24}{3}
=
8.
\]

	Now, we assume that $n\geq 2$.	
	Let $g \in \cent_{\cC_n}(s_1^2)$ be written in its normal form
	(\ref{eq:nf}).
	Since $s_1^2g=gs_1^2$ and $s_1^2\in\cC_1$, it follows from Lemma \ref{lem:keylem} that 
	\begin{equation*}
		\scalebox{0.55}{
			\begin{quantikz}
				&\gate[4]{s_1^2g}&\qw\\
				&				 &\qw\\
				& 		    	 &\qw\\
				&				 &\qw
			\end{quantikz}=
			\begin{quantikz}
				&\gate{S}&\gate{S}&\gate[4]{\up N{n}_g}&\gate[3]{\up N{n-1}_g}&\push{\cdots}&\gate[2]{\up N{2}_g}&\gate{\up N1_g}&\qw \\
				&\qw     &\qw     &\qw                 &\qw                   &\push{\cdots}&\qw                 &\qw            &\qw\\
				&\qw     &\qw     &\qw                 &\qw                   &\qw          &\qw                 &\qw            &\qw\\
				&\qw     &\qw     &\qw                 &\qw                   &\qw          &\qw                 &\qw            &\qw
			\end{quantikz}
		}
	\end{equation*}
	can be transformed by rewriting rules into its normal form
	\begin{equation*}
		\scalebox{0.55}{
			\begin{quantikz}
				&\gate[4]{gs_1^2}&\qw\\
				& 				 &\qw\\
				&				 &\qw\\
				&				 &\qw
			\end{quantikz}=
			\begin{quantikz}
				&\gate[4]{\up N{n}_g}&\gate[3]{\up N{n-1}_g}&\push{\cdots}&\gate[2]{\up N{2}_g}&\gate{\up N1_{gs_1^2}}&\qw \\
				&\qw                 &\qw                   &\push{\cdots}&\qw                 &\qw                   &\qw\\
				&\qw                 &\qw                   &\qw          &\qw                 &\qw                   &\qw\\
				&\qw                 &\qw                   &\qw          &\qw                 &\qw                   &\qw
			\end{quantikz}
		}.
	\end{equation*}
Hence, the $ZX$-normal forms satisfy the following preservation condition:
\begin{align}
\label{eq:preserve}
\up N i_{s_1^2g}=\up N i_g,
\quad 2\leq i\leq n.
\end{align}
	
	Let $\ell$ be the layer of $\up Nn_g$.
	Suppose $\ell=1$. Applying the rewriting rules in Lemma \ref{lem:ace_sel}, we obtain
the following possibilities:	\begin{center}
		\begin{minipage}{0.4\textwidth}
			\scalebox{0.55}{
				\begin{quantikz}
					&\gate{S}& \gate{S}&\gate{A_1} & \gate{C_i}  &  \gate[2]{D_j}&\qw\\
					&\qw&\qw       &\qw           &\qw           &               &\qw
				\end{quantikz}
				=
				\begin{quantikz}
					&\gate{A_1}& \gate{C_i} &\gate[2]{D_j}      &\qw&\qw&\qw\\
					&\qw       &\qw           &\qw              &\gate{S}         &\gate{S}&\qw
			\end{quantikz}} 
			\scalebox{0.55}{
				\begin{quantikz}
					&\gate{S}&\gate{S}&\gate{A_2}& \gate{C_1}  & \gate[2]{D_j} &\qw  \\
					&\qw       &\qw           &\qw       &\qw                &               & \qw       
				\end{quantikz}
				=
				\begin{quantikz}
					&\gate{A_2}& \gate{C_1} &\gate[2]{D_j} &\gate{U} &\qw\\
					&\qw&\qw           &               & \gate{X}  &\qw 
			\end{quantikz}} 
			\scalebox{0.55}{
				\begin{quantikz}
					&\gate{S}&\gate{S}& \gate{A_2} & \gate{C_2}  & \gate[2]{D_j} &\qw  \\
					&\qw       &\qw           &\qw          &\qw               &               &\qw
				\end{quantikz}
				=
				\begin{quantikz}
					&\gate{A_2}& \gate{C_1} &\gate[2]{D_j} &\qw\\
					&\qw&\qw           &               &\qw  
			\end{quantikz}} 
			\scalebox{0.55}{
				\begin{quantikz}
					&\gate{S}&\gate{S}&\gate{A_3}& \gate{C_{i}}  &\qw  
				\end{quantikz}
				=
				\begin{quantikz}
					&\gate{A_3}& \gate{C_{3-i}}  & \gate{S}& \gate{S}&\qw
			\end{quantikz}} 
		\end{minipage}
	\end{center}
for $i\in\{1,2\}$ and $j\in\{1,2,3,4\}$, where $U=1$ if $j=1$, and $U=h_1s_1^2h_1$ otherwise.
By condition (\ref{eq:preserve}), the third case is impossible,
since the rewriting changes the $C$-block from $C_2$ to $C_1$.

In the first and fourth cases, two trailing $S$-gates are produced
on the second wire. Since the $D$-blocks occur successively on
adjacent pairs of wires, repeated application of
Lemma \ref{lem:sd_propagation} moves these gates downward, one wire
at a time, until they reach the $E$-block of $\up Nn_g$.
By the $E$-rewriting rules in Lemma \ref{lem:ace_sel}, each
application of $S$ shifts $E_i$ to $E_{i+1}$, with the indices taken
modulo $4$. Hence the two $S$-gates change the $E$-block, contradicting
condition (\ref{eq:preserve}). Therefore, the first and fourth cases
are impossible.

It remains to consider the second case. In this case, the rewriting
produces a trailing $X$-gate on the second wire. Direct matrix
computation in ${\rm Cliff}_2$ gives
\begin{align*}
X_1D_1=D_1X_2,\quad X_1D_j=D_jX_1X_2,
\quad j\in\{2,3,4\}.
\end{align*}
Thus, as the $D$-blocks occur successively on adjacent pairs of wires,
a nontrivial $X$-gate is passed to the next wire at each step and
eventually reaches the $E$-block of $\up Nn_g$.

Since
\[
\{E_1,E_2,E_3,E_4\}=\langle s_1\rangle,
\]
the resulting terminal factor is of the form $X_1E_i$. However,
\[
X_1E_i\notin\langle s_1\rangle
=
\{E_1,E_2,E_3,E_4\}.
\]
Hence the resulting circuit cannot have the same $n$-th normal-form
factor as $\up Nn_g$, contradicting condition~\eqref{eq:preserve}.
Therefore, the second case is also impossible, and thus $\ell\neq1$.

	Now, we suppose that $\ell>1$. Again, we have $\up N n_{s_1^2g}=\up N n_{g}$.
Applying Lemma \ref{lem:ssb}, we first eliminate the cases
$B_2$ and $B_3$, since in these cases the rewriting changes the
$C$-block and therefore violates condition (\ref{eq:preserve}).

For the remaining cases, Lemma \ref{lem:ssd} shows that the cases
$D_3$ and $D_4$ produce two $S$-gates on the second wire when
$h_2s_2^2h_2$ is moved through the top $D$-block. These $S$-gates
then propagate through the subsequent blocks until they reach the
$E$-block of $\up Nn_g$, thereby changing $\up Nn_g$ and
contradicting condition (\ref{eq:preserve}).

Therefore, the top basic gates in $\up Nn_g$ must satisfy one of the
following conditions:	\begin{equation}\label{eq:typssg=gss}
		\scalebox{0.55}{
			\begin{quantikz}
				&\gate{S}& \gate{S}&\gate[2]{B} & \gate{C} &\gate[2]{D_1}  &\qw  \\
				&\qw       &\qw          &           &\qw           &               &\qw
			\end{quantikz}
			=
			\begin{quantikz}
				&\gate[2]{B}& \gate{C}&\gate[2]{D_1} & \gate{S} &\gate{S}  &\qw  \\
				&       &\qw          &           &\qw           &\qw               &\qw
			\end{quantikz}
		},
	\end{equation}
	\begin{equation}\label{eq:typhsshg=gss}
		\scalebox{0.55}{
			\begin{quantikz}
				&\gate{S}& \gate{S}&\gate[2]{B} & \gate{C} &\gate[2]{D_2}  &\qw  \\
				&\qw       &\qw          &           & \qw          &               &\qw
			\end{quantikz}
			=
			\begin{quantikz}
				&\gate[2]{B}& \gate{C}&\gate[2]{D_2} & \gate{H}&\gate{S} &\gate{S}  &\gate{H}&\qw  \\
				&       &\qw          &           &\qw           &\qw               &\qw&\qw&\qw
			\end{quantikz}
		},
	\end{equation}
	where $B\in \{B_1,B_4\}$.
	
	If they satisfy (\ref{eq:typssg=gss}), then
	\begin{align*}
		s_1^2	\left(\up N{n-1}_g\cdots \up N1_g\right)=\left(\up N{n-1}_g\cdots \up N1_g\right)s_1^2.
	\end{align*}
	On the other hand, if they satisfy  (\ref{eq:typhsshg=gss}), then \begin{align*}
		h_1s_1^2h_1 \left(\up N{n-1}_g\cdots \up N1_g\right)=\left(\up N{n-1}_g\cdots \up N1_g\right)s_1^2.
	\end{align*} 
	By setting $f = h_1\up N{n-1}_g\cdots \up N1_g h_1$, the above formula becomes
	\begin{align*}
		s_1^2(fh_1)=(fh_1)s_1^2.
	\end{align*}
	We can thus conclude that
	\begin{align}\label{eq:sizeequals1^2}
		\left|\left\{ u\in \cC_{n-1}: s_1^2u=us_1^2  \right\} \right|=	\left|\left\{ u \in \cC_{n-1}: h_1s_1^2h_1u =us_1^2  \right\} \right|.
	\end{align}
We now prove by induction on $m\geq1$ that
\begin{align*}
|\cent_{\cC_m}(s_1^2)|
=
2^{m^2+2m}
\prod_{i=1}^{m-1}(2^{2i}-1),
\end{align*}
where the empty product is understood to be $1$.
	
The case $m=1$ was established above. Now let $m\geq2$ and assume
that
\begin{align*}
|\cent_{\cC_{m-1}}(s_1^2)|
=
2^{(m-1)^2+2(m-1)}
\prod_{i=1}^{m-2}(2^{2i}-1).
\end{align*}

By the preceding analysis, the top basic gates of $\up Nm_g$
must satisfy either \eqref{eq:typssg=gss} or
\eqref{eq:typhsshg=gss}. For each of these two cases, there are
\[
3\sum_{k=0}^{m-2}4^k
\]
choices for the layer of $\up Nm_g$, the $A$-block, and the
preceding unrestricted $B$-blocks. There are $2$ choices for the
top $B$-block, $2$ choices for the $C$-block, and $4^{m-1}$
choices for the remaining $D$-blocks and the $E$-block.

By \eqref{eq:sizeequals1^2}, the two cases admit the same number of
choices for the lower normal-form factors. In each case, this number
is $
|\cent_{\cC_{m-1}}(s_1^2)|$.
Therefore, using the induction hypothesis, we obtain
\begin{align*}
\left|\cent_{\cC_m}(s_1^2)\right|
&=
3\left(\sum_{k=0}^{m-2}4^k\right)
\cdot2\cdot2\cdot4^{m-1}\cdot2
\left|\cent_{\cC_{m-1}}(s_1^2)\right|\\
&=
3\left(\sum_{k=0}^{m-2}4^k\right)
\cdot2\cdot2\cdot4^{m-1}\cdot2
\left(
2^{(m-1)^2+2(m-1)}
\prod_{i=1}^{m-2}(2^{2i}-1)
\right)\\
&=\left(2^{2(m-1)}-1 \right)
\cdot2\cdot2\cdot4^{m-1}\cdot2
\left(
2^{(m-1)^2+2(m-1)}
\prod_{i=1}^{m-2}(2^{2i}-1)
\right)\\
&=
2^{m^2+2m}
\prod_{i=1}^{m-1}(2^{2i}-1).
\end{align*}
This completes the induction.
\end{proof}

In the remainder of this section, we will prove the following theorem:
\begin{theorem}\label{thm:CG(s1^2)} The centralizer of $s_1^2$ in $\cC_n$ is given by
	\begin{align*}
		\cent_{\cC_n}(s_1^2) =\left\{
		\begin{array}{ll}
			\langle h_1s_1h_1s_1^3h_1, s_1\rangle,&\text{if }n=1,\\
			\geng{h_1s_1h_1s_1^3h_1,h_2,\dots,h_n}{s_1,s_2,\dots,s_n}{\Lambda_1,\Lambda_2,\dots,\Lambda_{n-1}},&\text{if }n\geq 2.
		\end{array}\right.
	\end{align*}
	If $n\geq 2$, then the generator $s_1$ can be omitted.
\end{theorem}

\begin{proof}
Suppose first that $n=1$. Under the isomorphism
(\ref{eq:isomC1S4}), we have
\begin{align*}
s_1^2\mapsto(13)(24),\quad
h_1s_1h_1s_1^3h_1&\mapsto(24).
\end{align*}
Since
$
\cent_{{\rm Sym}_4}\bigl((13)(24)\bigr)
=
\langle (1234),(24)\rangle
$,
it follows that
\begin{align*}
\cent_{\cC_1}(s_1^2)
=
\left\langle
s_1,\,
h_1s_1h_1s_1^3h_1
\right\rangle.
\end{align*}

Now assume that $n\geq2$. By Lemma~\ref{lem:tech_1}, the generator
$s_1$ may be omitted, since
\begin{align}
\label{eq:s1}
s_1
=
h_2s_2h_2s_2\Lambda_1h_2s_2h_2\Lambda_1h_2\Lambda_1.
\end{align}
	Let 
	\begin{equation*}
		\scalebox{0.55}{
			\begin{quantikz}
				&\gate[2]{g_{i,j}}&\qw\\
				&                   &\qw
			\end{quantikz}
		}:=
		\scalebox{0.55}{
			\begin{quantikz}
				&\gate[2]{B_i}&\gate{C_j}&\gate[2]{D_1}&\qw\\
				&             &\qw       &             &\qw
			\end{quantikz}
		}
	\end{equation*}
	for all $i\in\{1,4\}$ and $j\in\{1,2\}$.
	
	Let $G_n$ be the subgroup given in the statement, and define $F_n$ as
	\begin{align*}
		F_n =\begin{cases}
			\displaystyle{ \left\langle
				\begin{matrix}
					g_{1,1},g_{1,2},g_{4,1},g_{4,2},h_1s_1h_1s_1^3h_1, h_2,\\
					s_2
				\end{matrix}\right\rangle },&\text{if }n= 2,\\	
			\displaystyle{  \geng{g_{1,1},g_{1,2},g_{4,1},g_{4,2},h_1s_1h_1s_1^3h_1, h_2,\dots,h_n }{s_2,s_3,\dots,s_n}{\Lambda_2,\dots,\Lambda_{n-1}}},&\text{if }n\geq 3.			
		\end{cases}
	\end{align*}	
By Lemma \ref{lem:tech_1}, the generators of $F_n$ can be expressed
in terms of the generators of $G_n$, and conversely. Hence,
$
F_n=G_n.
$
Moreover,
$
G_n\subseteq\cent_{\cC_n}(s_1^2),
$
since every generator of $G_n$ commutes with $s_1^2$.
It therefore remains to prove, by induction on $n$, that
\begin{align*}
\cent_{\cC_n}(s_1^2)\subseteq F_n.
\end{align*}

We first consider the case $n=2$. 
Let
$
g=\up N2_g\up N1_g\in\cent_{\cC_2}(s_1^2)
$.
Recall that the top basic gates of $\up N2_g$ must satisfy either
(\ref{eq:typssg=gss}) or (\ref{eq:typhsshg=gss}).
If they satisfy (\ref{eq:typssg=gss}), then
$
\up N1_g\in\cent_{\cC_1}(s_1^2)\subseteq F_2
$.
Since $\up N2_g\in F_2$, it follows that $g\in F_2$.
Suppose instead that they satisfy (\ref{eq:typhsshg=gss}). Then
\begin{align*}
h_1s_1^2h_1\up N1_g
=
\up N1_gs_1^2.
\end{align*}
Multiplying both sides on the left by $h_1$, we obtain
\begin{align*}
s_1^2\left(h_1\up N1_g\right)
=
\left(h_1\up N1_g\right)s_1^2.
\end{align*}
Thus,
$
h_1\up N1_g
\in
\cent_{\cC_1}(s_1^2)
\subseteq
F_2
$.
By Lemma \ref{lem:tech_1}, we also have
\begin{equation}
\label{eq:BCD2h_1}
\scalebox{0.55}{
\begin{quantikz}
&\gate[2]{B}&\gate{C}&\gate[2]{D_2}&\gate{H}&\qw\\
&           &\qw     &             &\qw     &\qw
\end{quantikz}
}
\in F_2,
\end{equation}
where $B\in\{B_1,B_4\}$. Therefore,
\begin{align*}
g
=
\left(\up N2_gh_1\right)
\left(h_1\up N1_g\right)
\in
F_2.
\end{align*}
	Hence, $\cent_{\cC_2 }(s_1^2) =G_2$.

Now assume that $n\geq3$ and that the result holds for $n-1$.
Let $
g=\up Nn_g\up N{n-1}_g\cdots\up N1_g\in\cent_{\cC_n}(s_1^2)
$.
Recall that the top basic gates of $\up Nn_g$ must satisfy either
(\ref{eq:typssg=gss}) or (\ref{eq:typhsshg=gss}).

Suppose first that they satisfy (\ref{eq:typssg=gss}). By the induction hypothesis,
\begin{align*}
\up N{n-1}_g\cdots\up N1_g
\in
\cent_{\cC_{n-1}}(s_1^2) \subseteq  F_{n-1}\subseteq F_n.
\end{align*}
Moreover, the form of $\up Nn_g$ shows that $\up Nn_g\in F_n$.
Therefore,
\begin{align*}
g
=
\up Nn_g
\left(\up N{n-1}_g\cdots\up N1_g\right)
\in
F_n.
\end{align*}

Suppose instead that they satisfy
(\ref{eq:typhsshg=gss}). Then
\begin{align*}
g
=
\left(\up Nn_gh_1\right)
\left(h_1\up N{n-1}_g\cdots\up N1_g\right).
\end{align*}
By Lemma \ref{lem:tech_1},
$
\up Nn_gh_1\in F_n
\subseteq
\cent_{\cC_n}(s_1^2).
$
Since both $g$ and $\up Nn_gh_1$ commute with $s_1^2$, we have
\begin{align*}
h_1\up N{n-1}_g\cdots\up N1_g
&=
\left(\up Nn_gh_1\right)^{-1}g
\in
\cent_{\cC_n}(s_1^2) \cap \cC_{n-1} =\cent_{\cC_{n-1}}(s_1^2) .
\end{align*}
By the induction hypothesis,
$
h_1\up N{n-1}_g\cdots\up N1_g
\in
F_{n-1}
\subseteq
F_n.
$
Consequently,
$
g
\in
F_n
$
and
\begin{align*}
\cent_{\cC_n}(s_1^2)\subseteq F_n.
\end{align*}
This completes the induction.
\end{proof}

\section{The centralizer of $s_1$ gate in $\cC_n$}\label{sec:Cs}

\begin{theorem}\label{thm:|CG(s1)|}
	The order of $\cent_{\cC_n}(s_1)$ is given by
	\begin{align*}
		|\cent_{\cC_n}(s_1)|=\begin{cases}
			4,&\text{if }n=1,\\
			\displaystyle{2^{n^2+2n-1} \prod_{i=1}^{n-1}(2^{2i}-1)},&\text{if }n\geq 2.
		\end{cases}
	\end{align*}
\end{theorem}
\begin{proof}
For $n=1$, under the isomorphism in (\ref{eq:isomC1S4}), the element
$s_1$ corresponds to a $4$-cycle. Since the centralizer of a
$4$-cycle in ${\rm Sym}_4$ is the cyclic subgroup generated by that
cycle, we have
\begin{align}\label{eq:C1(s1)=<s1>}
\cent_{\cC_1}(s_1)=\langle s_1\rangle
\end{align}
and 
$
|\cent_{\cC_1}(s_1)|=4.
$

Now, we assume that $n\geq 2$.
	Let $g \in \cent_{\cC_n}(s_1)$ be written in its normal form (\ref{eq:nf}).
	Since $s_1g=gs_1$ and $s_1\in\cC_1$, it follows from Lemma \ref{lem:keylem} that 
	\begin{equation*}
		\scalebox{0.55}{
			\begin{quantikz}
				&\gate[4]{s_1g} &\qw\\
				& &\qw\\
				& &\qw\\
				& &\qw
			\end{quantikz}=
			\begin{quantikz}
				&\gate{S}&\gate[4]{\up N{n}_g}&\gate[3]{\up N{n-1}_g}&\push{\cdots}&\gate[2]{\up N{2}_g}&\gate{\up N1_g}&\qw\\
				&\qw     &\qw                 &\qw                   &\push{\cdots}&\qw                 &\qw            &\qw\\
				&\qw     &\qw                 &\qw                   &\qw          &\qw                 &\qw            &\qw\\
				&\qw     &\qw                  &\qw                  &\qw          &\qw                 &\qw            &\qw
			\end{quantikz}
		}
	\end{equation*}
	can be transformed by rewriting rules into its normal form
	\begin{equation*}
		\scalebox{0.55}{
			\begin{quantikz}
				&\gate[4]{gs_1} &\qw\\
				& &\qw\\
				& &\qw\\
				& &\qw
			\end{quantikz}=
			\begin{quantikz}
				&\gate[4]{\up N{n}_g}&\gate[3]{\up N{n-1}_g}&\push{\cdots}&\gate[2]{\up L{2}_g}&\gate{\up L1_{gs_1}}&\qw\\
				&\qw                 &\qw                   &\push{\cdots}&\qw                 &\qw                 &\qw\\
				&\qw                 &\qw                   &\qw          &\qw                 &\qw                 &\qw\\
				&\qw                 &\qw                   &\qw          &\qw                 &\qw                 &\qw
			\end{quantikz}
		}.
	\end{equation*}
	Hence, the $ZX$-normal forms satisfy the following preservation condition:
	\begin{align} \label{eq:preserve_s}
	\up N i_{s_1g} =\up N i_g ,\quad (2\leq i\leq n).
	\end{align}

	Since $g\in \cent_{\cC_n}(s_1)\subseteq \cent_{\cC_n}(s_1^2)$, the layer of $\up Nn_g$ cannot be 1.
Now, suppose that $\ell>1$. Applying Lemma \ref{lem:sb}, we first
eliminate the cases $B_2$ and $B_3$, since in these cases the
rewriting changes either the $B$-block or the $C$-block and therefore
violates (\ref{eq:preserve_s}).

For the remaining cases, Lemma \ref{lem:sd} eliminates both $D_3$
and $D_4$. In the case $D_3$, moving $h_2s_2h_2$ through the top
$D$-block produces two $S$-gates on the second wire. These $S$-gates then
propagate through the subsequent blocks until it reaches the
$E$-block of $\up Nn_g$, thereby changing $\up Nn_g$. In the case
$D_4$, the rewriting changes the top $D$-block itself. Thus, both
cases contradict condition (\ref{eq:preserve_s}).

Therefore, the top basic gates in $\up Nn_g$ must satisfy one of the following conditions:
	\begin{equation}\label{eq:typsg=gs}
		\scalebox{0.55}{
			\begin{quantikz}
				& \gate{S}&\gate[2]{B} & \gate{C} &\gate[2]{D_1}  &\qw  \\
				&\qw         &           &\qw           &               &\qw
			\end{quantikz}
			=
			\begin{quantikz}
				&\gate[2]{B}& \gate{C}&\gate[2]{D_1} & \gate{S}   &\qw  \\
				&       &\qw          &           &\qw                   &\qw
			\end{quantikz}
		},
	\end{equation}
	\begin{equation}\label{eq:typhshg=gs}
		\scalebox{0.55}{
			\begin{quantikz}
				& \gate{S}&\gate[2]{B} & \gate{C} &\gate[2]{D_2}  &\qw  \\
				&\qw         &           &\qw           &               &\qw
			\end{quantikz}
			=
			\begin{quantikz}
				&\gate[2]{B}& \gate{C}&\gate[2]{D_2} & \gate{H}&\gate{S}   &\gate{H}&\qw  \\
				&       &\qw          &           & \qw          &\qw               &\qw&\qw
			\end{quantikz}
		},
	\end{equation}
	where $B\in \{B_1,B_4\}$.

	If they satisfy (\ref{eq:typsg=gs}), then
	\begin{align*}
		s_1	\left(\up N{n-1}_g\cdots \up N1_g\right)=	\left(\up N{n-1}_g\cdots \up N1_g\right)s_1.
	\end{align*}
	On the other hand, if they satisfy  (\ref{eq:typhshg=gs}), then \begin{align*}
		h_1s_1h_1 	\left(\up N{n-1}_g\cdots \up N1_g\right)=	\left(\up N{n-1}_g\cdots \up N1_g\right)s_1.
	\end{align*} 
	By setting $f = h_1  \up N{n-1}_g\cdots \up N1_g h_1$, the above formula becomes
	\begin{align*}
		s_1(fh_1)=(fh_1)s_1.
	\end{align*}
	We can thus conclude that
	\begin{align}\label{eq:sizeequals1}
		\left|\left\{ u\in \cC_{n-1}: s_1u=us_1  \right\} \right|=	\left|\left\{ u \in \cC_{n-1}: h_1s_1h_1u =us_1  \right\} \right|.
	\end{align}
	
	We now prove by induction on $m\geq1$ that
\begin{align*}
|\cent_{\cC_m}(s_1)|
=
2^{m^2+2m-1}
\prod_{i=1}^{m-1}(2^{2i}-1),
\end{align*}
where the empty product is understood to be $1$.

The case $m=1$ was established above. Now let $m\geq2$ and assume
that
\begin{align*}
|\cent_{\cC_{m-1}}(s_1)|
=
2^{(m-1)^2+2(m-1)-1}
\prod_{i=1}^{m-2}(2^{2i}-1).
\end{align*}

By the preceding analysis, the top basic gates of $\up Nm_g$
must satisfy either (\ref{eq:typsg=gs}) or (\ref{eq:typhshg=gs}).
For each of these two cases, there are
\begin{align*}
3\sum_{k=0}^{m-2}4^k
\end{align*}
choices for the layer of $\up Nm_g$, the $A$-block, and the preceding unrestricted $B$-blocks. There are $2$ choices for
the top $B$-block, $2$ choices for the $C$-block, and
$
4^{m-1}
$
choices for the remaining $D$-blocks and the $E$-block.

By (\ref{eq:sizeequals1}), the two cases admit the same number of choices for the lower normal-form factors. In each case, this number is 
\begin{align*}
|\cent_{\cC_{m-1}}(s_1)|.
\end{align*}
Therefore, using the induction hypothesis, we obtain
\begin{align*}
|\cent_{\cC_m}(s_1)|
&=
3\left(\sum_{k=0}^{m-2}4^k\right)
\cdot2\cdot2\cdot4^{m-1}\cdot2\,
|\cent_{\cC_{m-1}}(s_1)|\\
&=
3\left(\sum_{k=0}^{m-2}4^k\right)
\cdot2\cdot2\cdot4^{m-1}\cdot2
\left(
2^{(m-1)^2+2(m-1)-1}
\prod_{i=1}^{m-2}(2^{2i}-1)
\right)\\
&=
2^{m^2+2m-1}
\prod_{i=1}^{m-1}(2^{2i}-1).
\end{align*}
This completes the induction.
\end{proof}

\begin{theorem}\label{thm:CG(s1)} The centralizer of $s_1$ in $\cC_n$ is given by
	\begin{align*}
		\cent_{\cC_n}(s_1) =\left\{
		\begin{array}{ll}
			\langle s_1\rangle,&\text{if }n=1,\\
			\geng{h_2,\dots,h_n}{s_1,s_2,\dots,s_n}{\Lambda_1,\Lambda_2,\dots,\Lambda_{n-1}},&\text{if }n\geq 2.
		\end{array}\right.
	\end{align*}
	Moreover, the $\cent_{\cC_n}(s_1)$ is a normal subgroup of $\cent_{\cC_n}(s_1^2)$.
	If $n\geq 2$, then the generator $s_1$ can be omitted.
\end{theorem}
\begin{proof}
The case $n=1$ was already established in
(\ref{eq:C1(s1)=<s1>}).
Now, we assume $n\geq 2$.
	The generator $s_1$ can be omitted since (\ref{eq:s1}).
	
	We first observe that 
	\begin{align*}
		F_n:=	 \geng{h_2,\dots,h_n}{s_2,\dots,s_n}{\Lambda_1,\Lambda_2,\dots,\Lambda_{n-1}}\subseteq \cent_{\cC_n}(s_1) .
	\end{align*}
	Thus, it suffices to show that the order of $\cent_{\cC_n}(s_1)$ is equal to the order of $F_n$.
	
	The only generator of $\cent_{\cC_n}(s_1^2)$ not in $F_n$ is $h_1s_1h_1s_1^3h_1$. By
	\begin{align*}
		(h_1s_1h_1s_1^3h_1)^{-1}\Lambda_1 (h_1s_1h_1s_1^3h_1) &=\Lambda_1s_2^2,\\
		(h_1s_1h_1s_1^3h_1)^{-1}\Lambda_2 (h_1s_1h_1s_1^3h_1) &=\Lambda_2,
	\end{align*}
	it follows that $F_n$ is normal in $\cent_{\cC_n}(s_1^2)$, and the order of $\cent_{\cC_n}(s_1^2)/F_n$ is $2$.
	By Theorem \ref{thm:|CG(s1)|}, the orders of $F_n$ and $\cent_{\cC_n}(s_1)$ are the same, completing the proof.
\end{proof}

\subsection{Application: A permutation representation of $\cC_n$}\label{sec:perm}
The subgroup 
\begin{align*}
	\langle\sigma_{(1,2)},\dots, \sigma_{(n-1,n)} \rangle
\end{align*} of $\cC_n$ is isomorphic to the symmetric group ${\rm Sym}_n$ via the mapping $\sigma_{(i,i+1)}$ to $(i,i+1)$. 
Hence, we may denote $\sigma_p \in \cC_n$ for the corresponding element $p \in {\rm Sym}_n$. 
The action of $\sigma_p$ on $h_i,s_i, \Lambda_{i\to j}$ are
\begin{align}
	\sigma_p^{-1}h_i\sigma_p =h_{ p(i) },\quad \sigma_p^{-1} s_i \sigma_p= s_{p(i)},\quad  \sigma_p^{-1}\Lambda_{i\to j} \sigma_p= \Lambda_{p(i)\to p(j)} \label{eq:permaction}
\end{align}
where $p(i)$ is the image of $i$ under the permutation $p$, and $\Lambda_{i \to j}$ denotes the controlled $z$ gate with qubit $i$ as the control and qubit $j$ as the target.

\begin{lemma}\label{lem:C_G(s)capC_G(hsh)}  The centralizer of $\{s_1,h_1s_1h_1\}$ in $\cC_n$ is given by
	\begin{align*}
		\cent_{\cC_n} (\{s_1,h_1s_1h_1\}) =\left\{ 
		\begin{array}{ll}
			\langle 1\rangle,&\text{if }n=1,\\
			\langle h_2,s_2\rangle,&\text{if }n=2,\\
			\geng{ h_{2},\dots, h_n}{s_{2},\dots, s_n}{\Lambda_{2},\dots, \Lambda_{n-1}}, &\text{if }n\geq 3.\end{array}
		\right.
	\end{align*}
\end{lemma}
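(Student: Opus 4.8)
The plan is to lift the problem to ${\rm Cliff}_n$ and exploit the unitary (matrix) structure together with Schur's lemma, rather than the normal-form bookkeeping of Sections~\ref{sec:Css}--\ref{sec:Cs}. Write $[\,\cdot\,]\colon{\rm Cliff}_n\to\cC_n$ for the quotient. For $n\geq 2$ I would show that $[U]\in\cC_n$ centralizes $\{s_1,h_1s_1h_1\}$ if and only if $U=I_2\otimes U'$ for some $U'\in{\rm Cliff}_{n-1}$ acting on qubits $2,\dots,n$. Granting this, the image under $[\,\cdot\,]$ of all such $I_2\otimes U'$ is precisely $\langle h_2,\dots,h_n,s_2,\dots,s_n,\Lambda_2,\dots,\Lambda_{n-1}\rangle$ (the generators of ${\rm Cliff}_{n-1}$ shifted up by one qubit), which is the asserted answer for $n\geq 2$---the $n=2$ case being the one in which the string $\Lambda_2,\dots,\Lambda_{n-1}$ is empty. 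The case $n=1$ is separate and small: $\langle s_1,h_1s_1h_1\rangle=\cC_1$, so the centralizer is the trivial center of $\cC_1\cong{\rm Sym}_4$; this, and the $n=2$ case, are most easily confirmed in GAP4, as was done for the earlier theorems.

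The first step is to remove phases. If $[U]$ centralizes $\{s_1,h_1s_1h_1\}$, then $US_1U^{-1}=\lambda S_1$ and $U(H_1S_1H_1)U^{-1}=\mu\,H_1S_1H_1$ for some roots of unity $\lambda,\mu$. Taking traces and using ${\rm Trace}(H_1S_1H_1)={\rm Trace}(S_1)={\rm Trace}(S)\cdot 2^{n-1}=(1+\omega^2)2^{n-1}\neq 0$---the same observation made in the introduction after the first theorem---forces $\lambda=\mu=1$, so $U$ commutes with $S_1$ and with $H_1S_1H_1$ in $U(2^n)$.

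The key step is then Schur's lemma. Since $S^2=Z$ and $(HSH)^2=HZH=X$, the unital subalgebra of $M_2(\bC)$ generated by $S$ and $HSH$ contains $Z$ and $X$, hence equals $M_2(\bC)$; therefore $U$ commutes with $P\otimes I_{2^{n-1}}$ for every $P\in M_2(\bC)$, and the commutant of $M_2(\bC)\otimes I_{2^{n-1}}$ in $M_{2^n}(\bC)$ being $I_2\otimes M_{2^{n-1}}(\bC)$ gives $U=I_2\otimes U'$ for a unique $U'\in U(2^{n-1})$. Finally, since $U$ then automatically fixes $X_1$ and $Z_1$ and conjugates a Pauli operator $I_2\otimes Q$ to $I_2\otimes(U'QU'^{-1})$, which lies in ${\rm Pauli}_n$ precisely when $U'QU'^{-1}\in{\rm Pauli}_{n-1}$, the hypothesis $U\in{\rm Cliff}_n$ is equivalent to $U'\in{\rm Cliff}_{n-1}$, as desired. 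The reverse inclusion is immediate: each of $h_2,\dots,h_n,s_2,\dots,s_n,\Lambda_2,\dots,\Lambda_{n-1}$ is represented by a unitary of the form $I_2\otimes(\cdot)$, which commutes with $S_1$ and with $H_1S_1H_1$.

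I do not expect a genuine obstacle here; the care needed is in the passage between ${\rm Cliff}_n$ and $\cC_n$---handled by the trace argument---and in verifying that the map ${\rm Cliff}_{n-1}\to\cC_n$, $U'\mapsto[I_2\otimes U']$, has image exactly $\langle h_2,\dots,h_n,s_2,\dots,s_n,\Lambda_2,\dots,\Lambda_{n-1}\rangle$ and kernel exactly the scalars, so that nothing extra creeps in; the latter holds because $I_2\otimes U'$ is scalar only if $U'$ is. If one prefers to stay inside the normal-form framework of the previous two sections, an alternative is to write $\cent_{\cC_n}(\{s_1,h_1s_1h_1\})=\cent_{\cC_n}(s_1)\cap h_1\cent_{\cC_n}(s_1)h_1$ via Theorem~\ref{thm:CG(s1)}, observe both factors contain $\langle h_2,\dots,h_n,s_2,\dots,s_n,\Lambda_2,\dots,\Lambda_{n-1}\rangle$, and run an induction on $n$ imposing $s_1g=gs_1$ and $h_1g=gh_1$ simultaneously on the normal form of $g$; the combinatorics of the $h_1$-rewriting on the top type-$B$ gate of $\up Ln$ would be the technical heart of that route.
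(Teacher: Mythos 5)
Your argument is correct, but it takes a genuinely different route from the paper. The paper stays entirely inside the normal-form machinery of Sections \ref{sec:Css}--\ref{sec:Cs}: it observes that the asserted subgroup is contained in the centralizer and is isomorphic to $\cC_{n-1}$, then runs an induction on $n$ that counts $|\cent_{\cC_n}(\{s_1,h_1s_1h_1\})|$ by determining (via the rewriting rules of Lemmas \ref{lem:sb} and \ref{lem:sd}, applied to the constraints from commuting with both $s_1$ and $h_1s_1h_1$) exactly which top type-$B$/$D$ gates of $\up Ln$ can occur, concluding the count equals $|\cC_{n-1}|$ --- essentially the second, ``alternative'' route you sketch at the end. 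Your main argument instead lifts to ${\rm Cliff}_n$, kills the phases by the trace observation already made in the introduction (valid since ${\rm Trace}(S_1)=(1+\omega^2)2^{n-1}\neq 0$), and then uses the commutant computation: the unital algebra generated by $S$ and $HSH$ contains $Z$ and $X$ and hence is all of $M_2(\bC)$, so any $U$ commuting with $S_1$ and $H_1S_1H_1$ has the form $I_2\otimes U'$, with $U\in{\rm Cliff}_n$ iff $U'\in{\rm Cliff}_{n-1}$; identifying the image of $U'\mapsto[I_2\otimes U']$ with $\langle h_2,\dots,h_n,s_2,\dots,s_n,\Lambda_2,\dots,\Lambda_{n-1}\rangle$ uses only that ${\rm Cliff}_{n-1}$ is generated by the standard gates and scalars, which follows from the normal-form theorem. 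Your approach is shorter and more conceptual, explains \emph{why} the centralizer is a shifted copy of $\cC_{n-1}$, and avoids the induction and gate-by-gate case analysis entirely; what it does not give (and what the paper's method is really built to produce, here and in Theorems \ref{thm:CG(s1^2)} and \ref{thm:CG(s1)}) is the explicit control over normal forms that the paper reuses across all three centralizer computations, including the ones (for $s_1$ and $s_1^2$) where the generated algebra is a proper subalgebra of $M_2(\bC)$ and the commutant argument alone would not pin down the answer.
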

\begin{proof}
For $n=1$, recall that
$
\cent_{\cC_1}(s_1)=\langle s_1\rangle.
$
A direct computation shows that $h_1s_1h_1$ does not commute with
$s_1^i$ for $i\in\{1,2,3\}$. Hence,
\begin{align*}
\cent_{\cC_1}\bigl(\{s_1,h_1s_1h_1\}\bigr)
=
\langle 1\rangle.
\end{align*}

 We first note that
$
\langle h_2,s_2\rangle
\subseteq
\cent_{\cC_2}(\{s_1,h_1s_1h_1\}),
$
and, for $n\geq3$,
\begin{align*}
\geng{h_2,\dots,h_n}{s_2,\dots,s_n}{\Lambda_2,\dots,\Lambda_{n-1}}
\subseteq
\cent_{\cC_n}(\{s_1,h_1s_1h_1\}).
\end{align*}
	Since the left-hand side is isomorphic to $\cC_{n-1}$, it suffices to show that the order of the right-hand side is equal to $|\cC_{n-1}|$, where we adopt the convention that $\cC_0=\langle1\rangle$.
	
We prove the statement by induction on $n$. The case $n=1$ has
already been established.
Now assume that $n\geq 2$.

	Let $g \in \cent_{\cC_n}(\{s_1,h_1s_1h_1\})$ be written in its normal form (\ref{eq:nf}).
	Since $h_1s_1h_1g=gh_1s_1h_1$ and $h_1s_1h_1\in\cC_1$, it follows from Lemma \ref{lem:keylem} that 	
	\begin{equation*}
		\scalebox{0.55}{
			\begin{quantikz}
				&\gate[4]{h_1s_1h_1g}&\qw\\
				&				 &\qw\\
				& 		    	 &\qw\\
				&				 &\qw
			\end{quantikz}=
			\begin{quantikz}
				&\gate{H}&\gate{S}&\gate{H}&\gate[4]{\up N{n}_g}&\gate[3]{\up N{n-1}_g}&\push{\cdots}&\gate[2]{\up N{2}_g}&\gate{\up N1_g}&\qw\\
				&\qw     &\qw     &\qw     &\qw                 &\qw                   &\push{\cdots}&\qw                 &\qw            &\qw\\
				&\qw     &\qw     &\qw     &\qw                 &\qw                   &\qw          &\qw                 &\qw            &\qw\\
				&\qw     &\qw     &\qw     &\qw                 &\qw                   &\qw          &\qw                  &\qw           &\qw
			\end{quantikz}
		}
	\end{equation*}
	can be transformed by rewriting rules into its normal form
	\begin{equation*}
		\scalebox{0.55}{
			\begin{quantikz}
				&\gate[4]{gh_1s_1h_1} &\qw\\
				& &\qw\\
				& &\qw\\
				& &\qw
			\end{quantikz}=
			\begin{quantikz}
				&\gate[4]{\up N{n}_g}&\gate[3]{\up N{n-1}_g}&\push{\cdots}&\gate[2]{\up N{2}_g} &\gate{\up N1_{gh_1s_1h_1}}&\qw\\
				&\qw                 &\qw                   &\push{\cdots}&\qw                  &\qw                       &\qw\\
				&\qw                 &\qw                   &\qw          &\qw                  &\qw                       &\qw\\
				&\qw                 &\qw                   &\qw          &\qw                  &\qw                       &\qw
			\end{quantikz}
		}.
	\end{equation*}
	Hence, the $ZX$-normal forms satisfy the following preservation condition:
	\begin{align}\label{eq:preserve_h1s1h1}
	\up Ni_{h_1s_1h_1g}=\up Ni_{g} ,\quad 2\leq i\leq n.
	\end{align}

	Recall that the layer of $\up Nn_g$ is greater than $1$ since $g \in \cent_{\cC_n}(s_1)$.
	Moreover, the equations (\ref{eq:typsg=gs}) and (\ref{eq:typhshg=gs}) hold. 
	Together with the rewriting rules in Lemmas \ref{lem:sb} and \ref{lem:sd}, these imply that the top basic
gates of $\up Nn_g$ must satisfy the following two conditions in order
for the rewriting from $h_1s_1h_1g$ to $gh_1s_1h_1$ to preserve the
normal form:
	\begin{equation*}
		\scalebox{0.55}{
			\begin{quantikz}
				& \gate{S}&\gate[2]{B_1} & \gate{C} &\gate[2]{D_1}  &\qw  \\
				&\qw         &\qw           &\qw           &               &\qw
			\end{quantikz}
			=
			\begin{quantikz}
				&\gate[2]{B_1}& \gate{C}&\gate[2]{D_1}&\gate{S}  &\qw  \\
				&       &\qw          &           &\qw                         &\qw
			\end{quantikz}
		},
	\end{equation*}
	\begin{equation*}
		\scalebox{0.55}{
			\begin{quantikz}
				&\gate{H}&\gate{S}& \gate{H}&\gate[2]{B_1} & \gate{C} &\gate[2]{D_1}  &\qw  \\
				&\qw     &\qw     &\qw      &           &\qw           &               &\qw
			\end{quantikz}
			=
			\begin{quantikz}
				&\gate[2]{B_1}& \gate{C}&\gate[2]{D_1} & \gate{H}&\gate{S}   &\gate{H}&\qw  \\
				&             &\qw          &           &\qw           &\qw               &\qw&\qw
			\end{quantikz}
		},
	\end{equation*}
	Thus, $\up N{n-1}_g\cdots \up N1_g \in \cent_{\cC_{n-1}}(\{s_1,h_1s_1h_1\})$.
	Since other basic gates in $\up Nn_g$ can be chosen without constraint, it follows from the induction hypothesis and (\ref{eq:orderCn}) that
	\begin{align*}
		|\cent_{\cC_{n}}(\{s_1,h_1s_1h_1\}) | &= 3 \left( \sum_{k=0}^{n-2} 4^k \right) \cdot 2\cdot 4^{n-1} \cdot|\cent_{\cC_{n-1}}(\{s_1,h_1s_1h_1\})|\\
		&= 3 \left( \sum_{k=0}^{n-2} 4^k \right) \cdot 2\cdot 4^{n-1} \cdot|\cC_{n-2}|=|\cC_{n-1}|.
	\end{align*}
	The proof is completed by induction.
\end{proof}

We are now ready for the main theorem of this paper.
\begin{theorem}
	Let $V$ be the conjugacy class of $s_1$ in $\cC_n$ and let $\sigma_g$ denote the permutation induced by the conjugate action of $g \in \cC_n$ on $V$. Then the map $\cC_n \to {\rm Sym}_{|V|}: g \mapsto \sigma_g$ is a faithful homomorphism. Moreover, the size of $V$ is equal to $2(4^n - 1)$.
\end{theorem}
\begin{proof}
	It is clear that the map $\cC_n\to {\rm Sym}_{|V|}: g \mapsto \sigma_g$ is a homomorphism.
	
	Suppose this map is not faithful. Then there exist $\sigma_f=\sigma_g$ for some distinct $f,g\in \cC_n$ such that
	\begin{align*}
		f^{-1}v f=g^{-1}vg \text{ for all }v \in V.
	\end{align*}
	This is equivalent to 
	\begin{align*}
		(gf^{-1}) v= v (gf^{-1}) \text{ for all }v \in V.
	\end{align*}
	It follows from (\ref{eq:permaction}) that  $s_i=\sigma_{(1,i)}s_1\sigma_{(1,i)}$ for $i\in\{2,\dots,n\}$.
	Thus, we conclude that $s_1,\dots,s_n$ and $h_1 s_1 h_1, \dots, h_n s_n h_n$ are all contained in $V$.
	Note that  
	\begin{align}\label{eq:capCsChsh}
		\bigcap_{v \in V }  \cent_{\cC_n} (v) \subseteq  \bigcap_{k=1 } ^n  \cent_{\cC_n} (s_k) \cap \cent_{\cC_n} (h_ks_kh_k). 
	\end{align}
	Let $p$ be the permutation $(1,n,n-1,\dots,2)$ and let $G_k = \cent_{\cC_n}(\{s_k,h_k s_k h_k\})$ for all $k \in \{1, \dots, n\}$.
	By (\ref{eq:permaction}) and Lemma \ref{lem:C_G(s)capC_G(hsh)}, we have
	\begin{align*}
		\sigma_p^{-1}G_1\sigma_p=  \sigma_p^{-1}\geng{h_2,\dots,h_n}{s_2,\dots,s_n}{\Lambda_2,\dots,\Lambda_{n-1}}\sigma_p= \geng{h_1,\dots, h_{n-1}}{s_1,\dots,s_{n-1}}{\Lambda_{1},\dots,\Lambda_{n-2}}=\cC_{n-1}.
	\end{align*}
	It immediately follows that 
	\begin{align*} 
		\sigma_p^{-1}G_1\sigma_p \cap 	\sigma_p^{-1}\cent_{\cC_n}(s_2)\sigma_p&= \cC_{n-1}\cap \cent_{\cC_n}(s_1) =\cent_{\cC_{n-1}}(s_1) .\\
		\sigma_p^{-1}G_1\sigma_p \cap 	\sigma_p^{-1}\cent_{\cC_n}(h_2s_2h_2)\sigma_p&= \cC_{n-1}\cap \cent_{\cC_n}(h_1s_1h_1) =\cent_{\cC_{n-1}}(h_1s_1h_1) .
	\end{align*}
	Applying Lemma \ref{lem:C_G(s)capC_G(hsh)} again yields
	\begin{align*}
		\bigcap_{k=1}^2 G_k =\geng{h_3,\dots,h_n}{s_3,\dots,s_n}{\Lambda_3,\dots,\Lambda_{n-1}}.
	\end{align*}
	Continuing this process $n-2$ times, we find that $(\ref{eq:capCsChsh})=\langle 1 \rangle$.
	Thus, $f=g$, a contradiction. We conclude that the map is faithful.

	By applying the Orbit-Stabilizer Theorem, Theorem \ref{thm:|CG(s1)|} along with equation (\ref{eq:orderCn}), we have
	\begin{align*}
		|V| = \frac{|\cC_n|}{|\cent_{\cC_n}(s_1)|} =2 (4^n-1).
	\end{align*}
	The proof is now complete.
\end{proof}

\begin{corollary}\label{cor:center}
	The center of $\cC_n$ is $\langle 1\rangle$.
\end{corollary}

As stated in Section \ref{sec:intro}, the size of the conjugacy class of $S_1$ in ${\rm Cliff}_n$ matches the size of the conjugacy class of $s_1$ in $\cC_n$. The Algorithm \ref{alg:construct_Cn} provides an efficient construction of $\cC_n$ using matrix multiplication, avoiding the need to compute a quotient.
\begin{center}
	\begin{mdframed}[roundcorner=10pt, linecolor=black, linewidth=1pt, backgroundcolor=white, innerleftmargin=10pt, innerrightmargin=10pt] 
		\begin{algorithm}[H]
			\caption{Constructing the $\cC_n$}
			\label{alg:construct_Cn}
			
			\SetKwInOut{Input}{Input}
			\SetKwInOut{Output}{Output}
			
			\Input{$n$}
			\Output{Permutation form of the generators of $\cC_n$}
			
			\BlankLine
			Construct the set of generators $G= \{H_i, S_i, CZ_j \}$\;
			
			\BlankLine
			Initialize the set $L \gets \{S_1\}$\;
			\While{$|L| < 2(4^n - 1)$}{
				\ForEach{$U \in G$}{
					$L \gets L \cup \{UCU^{-1}: \text{for }C\in L\}$\;
				}
			}
			
			\BlankLine
			Convert $L$ to a list\;
			Represent conjugate actions of $H_i$, $S_i$, and $CZ_j$ on $L$ as permutations\;
			
			\BlankLine
			\Return{Permutation form of the generators of $\cC_n$}\;
		\end{algorithm}
	\end{mdframed}
\end{center}

\subsection{Application: A presentations of $\cI\cN_n$}\label{sec:presentofINn}

Recall the normal subgroups of $\cC_n$:
\begin{theorem}\cite[Section 5]{LYPL} The groups $\cP_2$ and $[\cC_2, \cC_2]$ are the only non-trivial proper normal subgroups of $\cC_2$. For $n \geq 3$, $\cP_n$ is the only non-trivial proper normal subgroup of $\cC_n$. 
\end{theorem}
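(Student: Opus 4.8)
The plan is to study the extension $1 \to \cP_n \to \cC_n \to {\rm Sp}(2n,2) \to 1$, in which $\cC_n/\cP_n$ is the symplectic group acting on $\cP_n$ through the commutator pairing on ${\rm Pauli}_n$ modulo phases. Two structural inputs do the work: (i) $\cP_n$ is abelian — $-I$ dies in the projective quotient, so $xz=zx$ — and, viewed as an $\bF_2[{\rm Sp}(2n,2)]$-module, it is the natural module, which is irreducible because ${\rm Sp}(2n,2)$ acts transitively on the nonzero vectors of $\bF_2^{2n}$; and (ii) ${\rm Sp}(2n,2)$ is simple for $n\geq 3$, whereas ${\rm Sp}(4,2)\cong S_6$ has exactly one nontrivial proper normal subgroup, namely $A_6$.

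Given a nontrivial normal subgroup $N\trianglelefteq\cC_n$, first consider $N\cap\cP_n$: it is a normal subgroup of $\cC_n$ inside $\cP_n$, hence an $\bF_2[{\rm Sp}(2n,2)]$-submodule, so by irreducibility it is either $1$ or $\cP_n$. If $N\cap\cP_n=1$, then $[N,\cP_n]\subseteq N\cap\cP_n=1$ (both subgroups being normal), so $N$ centralizes $\cP_n$; but an element of $\cC_n$ centralizing $\cP_n$ acts trivially on the faithful module $\cP_n$ and therefore lies in $\cP_n$, i.e. $C_{\cC_n}(\cP_n)=\cP_n$. Hence $N\subseteq\cP_n$ and $N=1$, contradicting nontriviality. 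So $\cP_n\subseteq N$.

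Now $N/\cP_n$ is a normal subgroup of ${\rm Sp}(2n,2)$. For $n\geq 3$, simplicity forces $N/\cP_n\in\{1,{\rm Sp}(2n,2)\}$, i.e. $N=\cP_n$ or $N=\cC_n$; since $\cP_n$ itself is a proper nontrivial normal subgroup (the kernel of the symplectic quotient), this gives the claim for $n\geq 3$. For $n=2$, $N/\cP_2\in\{1,A_6,S_6\}$, so $N\in\{\cP_2,\ \text{preimage of }A_6,\ \cC_2\}$; it remains to see that the preimage of $A_6$ is $[\cC_2,\cC_2]$. Since ${\rm Sp}(4,2)$ acts nontrivially on the irreducible module $\cP_2$, we have $[\cC_2,\cP_2]=\cP_2$, so $\cP_2\subseteq[\cC_2,\cC_2]$; and the image of $[\cC_2,\cC_2]$ in ${\rm Sp}(4,2)$ is $[S_6,S_6]=A_6$. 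Thus $[\cC_2,\cC_2]$ contains $\cP_2$ and surjects onto $A_6$, so it equals the preimage of $A_6$ (consistently, $|[\cC_2,\cC_2]|=16\cdot 360=5760=|\cC_2|/2$).

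I expect the genuine work to be not the dichotomy argument above but the assembly of the background: pinning down $\cC_n/\cP_n\cong{\rm Sp}(2n,2)$ with $\cP_n$ the natural module (which is where one uses that Clifford conjugation realizes the full symplectic group on ${\rm Pauli}_n$ modulo phases), verifying irreducibility of this module over $\bF_2$, and invoking the simplicity classification for ${\rm Sp}(2n,2)$ — plus the mild extra care at $n=2$ to identify which lift of $A_6$ appears. With those in hand the proof is a short submodule-plus-simplicity argument.
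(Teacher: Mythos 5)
The paper does not actually prove this statement---it is quoted from \cite[Section 5]{LYPL}---so there is no internal proof to compare against. Your argument is the standard extension-theoretic one and, as written, it is correct. The dichotomy is sound: $N\cap\cP_n$ is an $\bF_2[{\rm Sp}(2n,2)]$-submodule of the natural module, hence $1$ or $\cP_n$; in the first case $[N,\cP_n]\subseteq N\cap\cP_n=1$ forces $N\subseteq C_{\cC_n}(\cP_n)=\cP_n$ and so $N=1$; therefore $\cP_n\subseteq N$ and everything reduces to the normal subgroups of ${\rm Sp}(2n,2)$, which is simple for $n\geq 3$ and isomorphic to $S_6$ for $n=2$. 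Your identification of the index-two subgroup at $n=2$ with $[\cC_2,\cC_2]$ is also right: $[\cC_2,\cP_2]$ is a nonzero submodule of the irreducible module $\cP_2$ (the action is nontrivial), so $\cP_2\subseteq[\cC_2,\cC_2]$, and the image of $[\cC_2,\cC_2]$ in $S_6$ is $A_6$.

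The one place where you are leaning on unproved background, and where a self-contained write-up would have to do real work, is exactly where you say: (i) surjectivity of $\cC_n\to{\rm Sp}(2n,2)$ with kernel precisely $\cP_n$ (faithfulness of the quotient action is what gives $C_{\cC_n}(\cP_n)=\cP_n$; it follows because an operator conjugating every Pauli to itself up to sign differs from a Pauli operator by something centralizing ${\rm Pauli}_n$, which spans the full matrix algebra, hence is a scalar); and (ii) simplicity of ${\rm Sp}(2n,2)$ for $n\geq3$ together with ${\rm Sp}(4,2)\cong S_6$. All of these are standard, and you flag them explicitly, so I count this as a complete and correct proof modulo well-known facts rather than as a gap.
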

The Clifford theory examines the representation theory of a group by analyzing the representations of its normal subgroups.
In this context, the inertia subgroups of the irreducible representations of normal subgroups play a significant role.
Mastel \cite{Ma} investigated the Clifford theory for $\cC_n$ and provided the following lemma:
\begin{lemma}\cite[Lemma IV.1]{Ma}
	All non-trivial irreducible representations of $\cP_n$ are conjugate in $\cC_n$. 
\end{lemma}
Combining this with the fact that conjugate representations of normal subgroups have isomorphic inertia subgroups, we conclude that there is only one inertia subgroup for the non-trivial representations of $\cP_n$ in $\cC_n$, which is described in the following theorem:
\begin{theorem} \cite[Theorem IV.3]{Ma}
	The inertia subgroup of a non-trivial representation of \( \cP_n \) in \( \cC_n \) is isomorphic to 
	\begin{align*}
		\cI\cN_n:= \left\{\begin{array}{ll}
			\langle h_1,h_2,s_2,h_1s_1^2h_1,M \rangle,&\text{if }n=2,\\
			\geng{h_1,h_2,\dots, h_n}{s_2,\dots,s_n}{ M, \Lambda_{2},\dots, \Lambda_{n-1}},&\text{if }n\geq 3, 
		\end{array}\right.
	\end{align*}
	where $M=  h_2\Lambda_1s_2^2h_2h_1s_1^2 h_2\Lambda_1h_2 $.
\end{theorem}
\begin{proof} 	
	In the statement of \cite[Theorem IV.3]{Ma}, an additional generator $h_1s_1^2h_1$ is included for $n\geq 3$.
	It follows from Lemma \ref{lem:tech_2} that 
	\begin{align*}
		h_1s_1^2h_1 \in \left\langle
		\begin{matrix}
			h_1,h_2,h_3 ,\\
			M,\Lambda_2
		\end{matrix}\right\rangle.
	\end{align*}
	Thus the generator $h_1s_1^2h_1$ becomes redundant for $n\geq 3$.
\end{proof}
The order of $\cI\cN_n$ is
\begin{align}\label{eq:orderINn}
	|\cI\cN_n| = 2^{n^2+2n} \prod_{i=1}^{n-1} (2^{2i} - 1).
\end{align}

The irreducible representations of $\cC_n$ are determined by $\cI\cN_n / \cP_n$ and ${\rm Sp}(2n, 2)$, as shown in \cite[Theorem IV.4]{Ma}.

\begin{theorem}
	The subgroups $\cent_{\cC_n}(z_1)$ and $\cI\cN_n$ are conjugate within $\cC_n$.
\end{theorem}
\begin{proof}
It follows from Lemma \ref{lem:tech_2} that 
\begin{align*}
\geng{h_1,h_2}{s_2}{s_1^3h_1\Lambda_1h_1s_1}&= \geng{h_1,h_2}{s_2}{h_1s_1^2h_1,M},\\
\geng{h_2,h_3}{s_2,s_3}{s_1^3h_1\Lambda_1h_1s_1,s_1^3h_1\Lambda_2h_1s_1} &=\geng{h_2,h_3}{s_2,s_3}{M,\Lambda_2}.
\end{align*}
Combined with Theorem \ref{thm:CG(s1^2)}, we have 
\begin{align*}
s_1^3h_1\cent_{\cC_2}(s_1^2)h_1s_1&=\geng{s_1^3h_1h_1s_1h_1s_1^3h_1h_1s_1,h_2}{s_2}{s_1^3h_1\Lambda_1h_1s_1} =\geng{h_1,h_2}{s_2}{h_1s_1^2h_1,M}=\cI\cN_2
\end{align*}
and  
	\begin{align*}
		s_1^3h_1\cent_{\cC_n}(s_1^2)h_1s_1&=\geng{s_1^3h_1h_1s_1h_1s_1^3h_1h_1s_1,h_2,\dots,h_n}{s_2,\dots,s_n}{s_1^3h_1\Lambda_1h_1s_1,s_1^3h_1\Lambda_2h_1s_1,\dots,\Lambda_{n-1}}\\
		&=\geng{h_1,h_2,\dots,h_n}{s_2,\dots,s_n}{M,\Lambda_2,\dots,\Lambda_{n-1}}= \cI\cN_n.
	\end{align*}
for $n\geq 3$.
The proof is complete.
\end{proof}

Let $n\geq 2$ and $g=h_1s_1h_1s_1^3h_1$. Recall that 
\begin{align*}
	\cent_{\cC_n}(s_1)=\geng{h_2,\dots,h_n}{s_1,\dots,s_n}{\Lambda_1,\dots,\Lambda_{n-1}} \trianglelefteq \geng{g,h_2,\dots,h_n}{s_1,\dots,s_n}{\Lambda_1,\dots,\Lambda_{n-1}}=\cent_{\cC_n}(s_1^2) \simeq 	\cI\cN_n .
\end{align*}
Observe that 
\begin{align*}
	\cent_{\cC_n}(s_1^2) =  \langle g\rangle  \ltimes_\phi \cent_{\cC_n}(s_1) 
\end{align*}
where $\phi:\langle g\rangle \to {\rm Aut}(\cent_{\cC_n}(s_1))$ is the homomorphism given by $\phi(u)(v)=u^{-1}v u$ for $u\in \langle g\rangle$ and $v\in \cent_{\cC_n}(s_1)$. 

\begin{theorem}
	Let $n\geq 2$.
	The group $\cI\cN_n$ is generated by the symbols 
	$$\{g\}\cup \{ h_i, s_j, \Lambda_{k} : 2\leq i\leq n, \, 1\leq j\leq n, \, 1\leq k\leq n-1\}$$ subject to the following relations:
	\begin{itemize}
		\item(Q1) $g^2=1$.\\[-3mm]
		\item(Q2) $gh_ig=h_i$ for all $i$.\\[-3mm]
		\item(Q3) $gs_1g=s_1^3$ and $gs_jg=s_j$ for all $j\neq 1$.\\[-3mm]
		\item(Q4) $g\Lambda_1g=\Lambda_1s_2^2$ and $g\Lambda_kg=\Lambda_k$ for all $k\neq 1$.\\[-3mm]
		\item(R1) $h_i^2=s_j^4=1$ for all $i,j$. \\[-3mm]
		\item(R2) $(s_ih_i)^3=1$ for all $i$.  \\[-3mm]

		\item(R3) $\Lambda_k^2=1$ for all $k$.\\[-3mm]
		\item(R4) $h_ih_{j}=h_{j}h_i$, \,\, $s_{i}s_{j}=s_{j}s_i$, \,\, and \,\, $h_is_j=s_jh_i$ for all $i,j$ with $i\neq j$. \\[-3mm]		
		\item(R5) $h_{i}\Lambda_k=\Lambda_kh_{i}$ for all $i\notin\{k,k+1\}$.\\[-3mm]
		\item(R6) $s_j\Lambda_k=\Lambda_ks_j$ for all $j,k$. \\[-3mm]
		
		\item(R7) $h_is_i^2h_i\Lambda_i=\Lambda_ih_is_i^2h_is_{i+1}^2$ for all $2\leq i\leq n-1$.\\[-3mm]
		\item(R8) $h_{i+1}s_{i+1}^2h_{i+1}\Lambda_i=\Lambda_is_i^2h_{i+1}s_{i+1}^2h_{i+1}$ for all $1\leq i\leq n-1$.\\[-3mm]
		\item(R9) $\Lambda_ih_i\Lambda_i =s_ih_i\Lambda_is_{i}h_is_is_{i+1} $  for all $2\leq i\leq n-1$.\\[-3mm]
		\item(R10) $\Lambda_ih_{i+1}\Lambda_i =s_{i+1}h_{i+1}\Lambda_is_{i}s_{i+1}h_{i+1}s_{i+1}$ for all $1\leq i\leq n-1$.\\[1mm]
		When $n\geq 3$, we also have
		\item(R11) $\Lambda_k\Lambda_{k'}=\Lambda_{k'}\Lambda_k$ for all $k\neq k'$.\\[-3mm]
		
		\item(R12) $(\Lambda_{i+1}h_{i+1}h_{i+2}\Lambda_{i+1}h_{i+1}h_{i+2}\Lambda_{i})^3=1$ for all $1\leq i\leq n-2$.\\[1mm]
		When $n\geq 4$, we also have
		\item(R13) $(\Lambda_{i}h_{i}h_{i+1}\Lambda_ih_{i}h_{i+1}\Lambda_{i+1})^3=1$ for all $2\leq i\leq n-2$.\\[-3mm]
		\item(R14) $\Lambda_ih_{i}h_{i+1}\Lambda_ih_{i+1}h_{i+2}\Lambda_{i+1}h_{i+1}h_{i+2}\Lambda_ih_{i}h_{i+1}\Lambda_i=\\[1mm]
		\Lambda_{i+1}h_{i+1}h_{i+2}\Lambda_{i+1}h_{i}h_{i+1}\Lambda_ih_{i}h_{i+1}\Lambda_{i+1}h_{i+1}h_{i+2}\Lambda_{i+1}$ for all $2\leq i\leq n-2$.
	\end{itemize}	
	
\end{theorem}
\begin{proof}
	Let $\cG_n$ be the group generated by the symbols 
	$$ \{ h_i, s_j, \Lambda_{k} : 2\leq i\leq n, \, 1\leq j\leq n, \, 1\leq k\leq n-1\}$$	
	subject to the relations (R1)--(R$r$), where $r = 10$ if $n = 2$, $r = 12$ if $n = 3$, and $r = 14$ if $n \geq 4$.
	By Theorem \ref{thm:CG(s1)}, the map $\cG_n \to \cent_{\cC_n}(s_1)$, which sends $h_i \mapsto h_i$, $s_j \mapsto s_j$, and $\Lambda_k \mapsto \Lambda_k$, is a surjective homomorphism.
	On the other hand, the relations (R1)--(R$r$), which correspond to the relations in \cite[Figure 8]{Sel}, can derive all the rewriting rules required to transform $\nf(u)\nf(v)$ into $\nf(uv)$ for all $u,v \in \cent_{\cC_n}(s_1)$.
	This induces a surjective homomorphism from $\cent_{\cC_n}(s_1)$ onto $\cG_n$.
	Hence, $\cG_n$ is isomorphic to $\cent_{\cC_n}(s_1)$.
	
	Observe that 
	\begin{align*}
		\phi(g)(s_1)&= s_1^{3},\\
		\phi(g)(\Lambda_1)&=\Lambda_1s_2^2,\\
		\phi(g)(v) &=v\quad\text{ for all }v\in \{h_2,\dots,h_n,s_2,\dots,s_n,\Lambda_2,\dots,\Lambda_{n-1}\}.
	\end{align*}
	Applying the method in \cite[Section 10]{Joh} yields that   $\langle g\rangle\ltimes_\phi	\cent_{\cC_n}(s_1) $ has a presentation  
	\begin{align*}
		\langle g,V  \,|\, g^2=1, g^{-1}vg=\phi(g)(v) \text{ for }v\in V,\text{(R1)--(R$r$)} \rangle.
	\end{align*}
	This completes the proof.
\end{proof}

\section{Conclusions}\label{sec:openprob}
In this paper, we determined the centralizers of the $z_1$ and
$s_1$ gates in $\cC_n$ using the normal form. As applications,
we obtained a faithful permutation representation of $\cC_n$ of
degree $2(4^n-1)$ and a presentation of the inertia subgroup
$\cI\cN_n$.

The same normal form approach also yields explicit descriptions of
the centralizers of $h_1$ and $\Lambda_1$. Their proofs are omitted,
as they require lengthy case analyses beyond the scope of the present
paper.
Complete proofs can be found in \cite{LeeThesis}.

\begin{theorem*}\label{thm:CG(h1)} The centralizer of $h_1$ in $\cC_n$ is given by
	\begin{align*}
		\cent_{\cC_n}(h_1) =\left\{\begin{array}{ll}
			\langle h_1s_1^2h_1s_1^2,h_1\rangle,&\text{if }n=1,\\
			\langle h_1s_1^2h_1s_1^2,h_1,h_2,s_2\rangle,&\text{if }n=2,\\
			\geng{h_1s_1^2h_1s_1^2,h_1,\dots,h_n}{s_2, \dots, s_n}{ \Lambda_{2}, \dots, \Lambda_{n-1 }},&\text{if }n\geq 3.
		\end{array}
		\right. 
	\end{align*}
	Moreover, the order of $\cent_{\cC_n}(h_1)$ is 
	\begin{align*}
		|\cent_{\cC_n}(h_1)| =\begin{cases}
			4,&\text{if }n=1,\\
			4|\cC_{n-1}|,&\text{if }n\geq 2.
		\end{cases}
	\end{align*}
\end{theorem*}
Let $v_1=s_1s_2^2h_2s_2^2\Lambda_1h_2\Lambda_1$, $v_2=h_1h_2s_2h_2s_2\Lambda_1 h_1h_2\Lambda_1h_1h_2$ and 
\begin{equation*}
	\scalebox{0.55}{
		\begin{quantikz}
			&\gate[3]{g_{i,j,k}}&\qw\\
			&                   &\qw\\
			&                   &\qw
		\end{quantikz}
	}:=
	\scalebox{0.55}{
		\begin{quantikz}
			&\qw        &\gate[2]{B_j}&\gate{C_k}&\gate[2]{D_1}&\qw          &\qw\\
			&\gate[2]{B_i}&            &\qw     &             &\gate[2]{D_1}&\qw\\
			&           &\qw         &\qw     &\qw          &             &\qw
		\end{quantikz}
	}
\end{equation*}
for all $i,j\in\{1,4\}$ and $k\in\{1,2\}$.
\begin{theorem*}
	The centralizer of $\Lambda_1$ in $\cC_n$ is given by
	\begin{align*}
		\cent_{\cC_n}(\Lambda_1) =\begin{cases}
			\displaystyle{  \geng{v_1,v_2}  {s_1,s_2}{\Lambda_1}},&\text{if }n=2,\\
			\displaystyle{  \geng{g_{1,1,1},\dots,g_{4,4,2},v_1,v_2 ,h_3,\dots,h_n }{s_1,s_2,s_3,\dots,s_n}{\Lambda_1,\dots,\Lambda_{n-1}}},&\text{if }n\geq 3.			
		\end{cases}
	\end{align*}
	Moreover, the order of $\cent_{\cC_n}(\Lambda_1)$ is 
	\begin{align*}
		|\cent_{\cC_n}(\Lambda_1)|=\begin{cases}
			192,&\text{if }n=2,\\
			\displaystyle{3\cdot 2^{n^2+2n-2}\prod_{i=1}^{n-2}(2^{2i}-1) },&\text{if }n\geq 3.
		\end{cases}
	\end{align*}
\end{theorem*}

\section*{Acknowledgments}
I would like to thank Prof. Yung-Ning Peng for his helpful comments and discussions, as well as the anonymous reviewers for their valuable comments and suggestions that improved the presentation of this paper.

\appendix

\section{A presentation of projective Clifford groups}\label{sec:appendix}

\begin{theorem}\label{thm:presentationCn}
	The group $\cC_n$ is generated by the symbols
	\begin{align*}
		\begin{cases}
			\{ h_1, s_1\},&\text{if }n=1,\\
			\{ h_1,\dots, h_n,s_1,\dots,s_n,\Lambda_{1},\dots,\Lambda_{n-1}\},&\text{if }n\geq 2
		\end{cases}
	\end{align*}
	subject to the following relations:	
	\begin{itemize}
		\item(R1) $h_i^2=s_i^4=1$ for all $i$. \\[-3mm]
		\item(R2) $(s_ih_i)^3=1$ for all $i$.  \\[1mm]		
		When $n\geq 2$:
		
		\item(R3) $\Lambda_j^2=1$ for all $j$.\\[-3mm]
		\item(R4) $h_ih_{j}=h_{j}h_i$, \,\, $s_{i}s_{j}=s_{j}s_i$, \,\, and \,\, $h_is_j=s_jh_i$ for all $i\neq j$. \\[-3mm]		
		\item(R5) $h_{i}\Lambda_j=\Lambda_jh_{i}$ for all $i\notin\{j,j+1\}$.\\[-3mm]
		\item(R6) $s_i\Lambda_j=\Lambda_js_i$ for all $i, j$. \\[-3mm]
		
		\item(R7) $h_is_i^2h_i\Lambda_i=\Lambda_ih_is_i^2h_is_{i+1}^2$ for all $1\leq i\leq n-1$.\\[-3mm]
		\item(R8) $h_{i+1}s_{i+1}^2h_{i+1}\Lambda_i=\Lambda_is_i^2h_{i+1}s_{i+1}^2h_{i+1}$ for all $1\leq i\leq n-1$.\\[-3mm]
		\item(R9) $\Lambda_ih_i\Lambda_i =s_ih_i\Lambda_is_{i}h_is_is_{i+1} $  for all $1\leq i\leq n-1$.\\[-3mm]
		\item(R10) $\Lambda_ih_{i+1}\Lambda_i =s_{i+1}h_{i+1}\Lambda_is_{i}s_{i+1}h_{i+1}s_{i+1}$ for all $1\leq i\leq n-1$.\\[1mm]
		When $n\geq 3$, we also have
		\item(R11) $\Lambda_i\Lambda_j=\Lambda_j\Lambda_i$ for all $i\neq j$.\\[-3mm]
		\item(R12) $\Lambda_ih_{i}h_{i+1}\Lambda_ih_{i+1}h_{i+2}\Lambda_{i+1}h_{i+1}h_{i+2}\Lambda_ih_{i}h_{i+1}\Lambda_i=\\[1mm]
		\Lambda_{i+1}h_{i+1}h_{i+2}\Lambda_{i+1}h_{i}h_{i+1}\Lambda_ih_{i}h_{i+1}\Lambda_{i+1}h_{i+1}h_{i+2}\Lambda_{i+1}$ for all $1\leq i\leq n-2$.
		\item(R13) $(\Lambda_{i}h_{i}h_{i+1}\Lambda_ih_{i}h_{i+1}\Lambda_{i+1})^3=1$ for all $1\leq i\leq n-2$.\\[-3mm]
		\item(R14) $(\Lambda_{i+1}h_{i+1}h_{i+2}\Lambda_{i+1}h_{i+1}h_{i+2}\Lambda_{i})^3=1$ for all $1\leq i\leq n-2$.
	\end{itemize}
\end{theorem}
\begin{proof}
	Let $\cG_n$ denote the group defined in the theorem statement.
	It is easy to see that the map $\cG_n\to \cC_n$ that send $h_i,s_i,\Lambda_j$ to $h_i,s_i,\Lambda_j$ is a surjective homomorphism.
	On the other hand, the relations (R1)--(R$r$) ($r=2$ if $n=1$, $r=10$ if $n=2$, $r=14$ if $n\geq 3$), which correspond to the relations in \cite[Figure 8]{Sel}, can derive all the rewriting rules required to transform $\nf(f)\nf(g)$ into $\nf(fg)$ for all $f,g \in \cC_n$.
	This induces a surjective homomorphism from $\cC_n$ onto $\cG_n$.
	Hence, $\cG_n$ is isomorphic to $\cC_n$.
\end{proof}

\end{document}